\newcommand{\var}{\mathrm{Var}}
\newtheorem{theorem}{Theorem}
\newtheorem{prop}[theorem]{Proposition}
\newtheorem{lemma}[theorem]{Lemma}
\newtheorem{cor}[theorem]{Corollary}
\theoremstyle{remark}
\newtheorem*{remark}{Remark}
\theoremstyle{definition}
\newtheorem{defn}{Definition}[section]
\title[Counting sign changes]{Counting sign changes of partial sums of random multiplicative functions}
\author{Nick Geis and Ghaith Hiary}
\begin{document}

\maketitle

\begin{abstract}
Let $f$ be a Rademacher random multiplicative function. Let
$$M_f(u):=\sum_{n \leq u} f(n)$$
be the partial sum of $f$.
Let $V_f(x)$ denote the number of sign changes of $M_f(u)$ up to $x$. 
We show that for any constant $c > 2$, 
$$V_f(x) = \Omega ((\log \log \log x)^{1/c} )$$ 
almost surely.
\end{abstract}

\section{Introduction}

Let $\{f(p)\}_{p \text{-prime}}$ be a set of independent and identically distributed random variables taking $\pm 1$ with probability $1/2$. 
Extend $f$ multiplicatively to squarefree numbers $n$ by
\[
f(n) = 
 \displaystyle   \prod_{p \mid n} f(p),
\]
and set $f(n)=0$ if $n$ is not squarefree.
Such $f$ is called a \emph{Rademacher random multiplicative variable}. These functions were introduced in 1944 by Wintner \cite{wintner1944} as a model for the M\"obius function $\mu(n)$. Wintner  studied the random Dirichlet series
\[
F(s) := \sum_{n \geq 1} \frac{f(n)}{n^s},
\]
showing that $F(s)$, as well as its Euler product
\[
F(s) = \prod_p \left( 1 + \frac{f(p)}{p^s} \right),
\] 
are convergent for $\Re(s) > 1/2$ almost surely, and that the function defined by them has a natural boundary on the line $\Re(s)=1/2$.
\begin{comment}
Furthermore, he showed that the partial sums
\[
M(x) := M_f(x) = \sum_{n \leq x} f(n)
\]
satisfies for any $\epsilon > 0$ both
\[
M(x) = O (x^{1/2 + \epsilon}) \ \ \ \text{ and } \ \ \ M(x) \neq O(x^{1/2 - \epsilon}),
\]
almost surely. The upper bound on $M(x)$ is the most interesting as it is well-known that the equivalent statement for the Mertens' function, $\mathcal{M}(x) = \sum_{n \leq x} \mu(n)$, is equivalent to the Riemann Hypothesis.
\end{comment}
Consequently, Wintner derived an almost sure upper bound and almost sure $\Omega$-bound for the partial sum 
\[
M_f(x) = \sum_{n\leq x} f(n),
\]
of the form 
$$M_f(x)=O(x^{1/2+\epsilon})\qquad \text{and}\qquad M_f(x)\ne O_{\epsilon}(x^{1/2-\epsilon}),$$ 
for any fixed positive $\epsilon$.

The study of random multiplicative functions has since  been largely concerned with improving Wintner's bounds on $M_f(x)$, in pursuit of an analogue of the law of iterated logarithm. This includes the seminal work of Hal\'asz~\cite{halasz}. 

In 2023, the central question of the true size of the fluctuations of $M_f(x)$ is finally resolved if one combines the work of Harper \cite{harper2023almost} with the upcoming work of Caich \cite{caich2023}. Their combined efforts yield the following sharp upper bound on $M_f(x)$: for any $\epsilon > 0$, almost surely,
\[
M_f(x) = O_{\epsilon} \Big( \sqrt{x} (\log \log x)^{1/4 + \epsilon} \Big).
\] 
This upper bound is sharp in the sense that the $1/4$ exponent cannot be replaced by any smaller number.\footnote{The big-$O$ estimate applies when $x\ge x_0$ where $x_0$ may depend on $f$.} With this question resolved, other questions about the behavior of $M_f(x)$ remain of interest, such as the number of sign changes of $M_f(u)$ up to $x$ with $x$ tending to $\infty$, which is the subject of this work.

We say that a real-valued function $\varphi(x)$ has a \emph{sign change} in the interval $[a, b]$ if neither $\varphi(x) \geq 0$ nor $\varphi(x) \leq 0$ holds throughout $[a, b]$. Furthermore, we say that $\varphi(x)$ \emph{changes signs infinitely often as $x \to \infty$} if neither $\varphi(x) \geq 0$ nor $\varphi(x) \leq 0$ holds for all sufficiently large $x$. 
Aymone, Heap and Zhao \cite{sign_changes} observed that the methods of Harper \cite{harper2023almost} produce arbitrarily large positive and arbitrarily large negative values of $M_f(x)$ as $x\to \infty$, almost surely. Consequently, $M_f(x)$ changes signs infinitely often as $x \to \infty$, almost surely. 
\cite{sign_changes} found a particularly simple proof of this consequence of Harper's work, 
inspired by corresponding proofs in the deterministic setting for sums like $\sum_{n \leq x} \mu(n)$, where $\mu$ is the Mobius function.\footnote{See Chapter 11 of Bateman and Diamond \cite{batemandiamond} for an introduction to these methods.} The proof in \cite{sign_changes} 
relies, essentially, on the partial summation formula
\begin{equation}\label{partial summation}
F(s) =s\int_1^{\infty} \frac{M_f(u)}{u^{1+s}}\,du,
\end{equation}
where $s=\sigma+it$ and $\sigma>1/2$,
together with the following two facts. First, there exists a sequence $\sigma_{\ell} \to 1/2^+$ such that almost surely,
\begin{equation}\label{eq:facts1}
\int_1^{\infty} \frac{M_f(u)}{u^{1 + \sigma_{\ell}}} \ du \to 0
\end{equation}
as  $\ell \to \infty$.
Second, almost surely,
\begin{equation}\label{eq:facts2}
\int_1^{\infty} \frac{|M_f(u)|}{u^{1 + \sigma}} \, du \to \infty,
\end{equation}
as $\sigma\to 1/2^+$. The contrast between the behaviors of $M_f(x)$ in \eqref{eq:facts1} versus \eqref{eq:facts2} captures the necessarily infinitely-many sign changes of $M_f(x)$.   
The proof in \cite{sign_changes} does not, however, supply information about how often the sign changes of $M_f(x)$ occur, almost surely. 
Answering this question is the goal of this work. 

To quantify the main result in \cite{sign_changes} - that is, to quantify the number of sign changes - we first construct a sequence of growing intervals $[y_k, X_k]$ that eventually (for large enough $k$) each contain at least one sign change of $M_f(x)$, almost surely. This is the content of our main theorem.

\begin{theorem}\label{thm:mainresult}
    Let $c$, $A_0$, and $A_1$ be any constants satisfying $c > 2$, $A_0 \in (0, 1/6)$, and $A_1 > 1$. 
    Define the sequences
    \[
    \sigma_k := \frac{1}{2} + \frac{1}{\exp\left( \exp(k^c) \right)},
    \]
    \[
    X_k := \exp\left( \frac{1}{(\sigma_k - 1/2)^2} \right),
    \]
    and
    \[
    y_k := \exp \left( \left( \frac{1}{\sigma_k - 1/2} \right)^{A_0} \log^{-A_1} \left( \frac{1}{\sigma_k - 1/2} \right) \right).
    \]
    where $k$ runs through the positive integers.
Then, for almost all random multiplicative functions $f$ there exists a constant $k_0 =k_0(f,c,A_0,A_1) \geq 1$ such that $M_f(u)$ has at least one sign change in the intervals $[y_k, X_k]$ for each $k \geq k_0$.
\end{theorem}

As a consequence of Thereom~\ref{thm:mainresult}, we establish in Corollary~\ref{cor:1} an almost sure lower bound on the number of sign changes of $M_f(u)$ as $u$ ranges in the interval $[1,x]$ with $x\to \infty$. This is achieved by first choosing $A_0$ and $A_1$ so as to fit as many intervals $[y_k, X_k]$ in $[1, x]$ and then counting how many of those intervals are mutually disjoint. 

\begin{cor}\label{cor:1}
    Let $V_f(x)$ denote the number of sign-changes that $M_f(u)$ has in $[1, x]$. Let $c$ be any constant such that $c> 2$. For almost all random multiplicative functions $f$ there exists constants $x_0:= x_0(f, c) \geq 1$ and $C := C(f, c) \geq 1$ such that
    \[
    V_f(x) \geq \left(\log \left( 7 \log \log x \right)\right)^{1/c} - C
    \]
    for all $x \geq x_0$. In other words\footnote{ The $\Omega$-notation is in the sense of Hardy and Littlewood (Section 5.1 \cite{tenenbaum2015introduction}). Namely, we say that $f(x) = \Omega(g(x))$ if
    $\limsup_{x \to \infty} \left| f(x)/g(x) \right| > 0$.},
     \[
    V_f(x) = \Omega \left( (\log \log \log x)^{1/c} \right),
    \]
    almost surely.
\end{cor}

Our proof strategy is similar to \cite{sign_changes}, but makes adjustments to \eqref{eq:facts1} and \eqref{eq:facts2}. 
First, the assertion in \eqref{eq:facts1} expresses that the random Dirichlet series $F$ satisfies $F(\sigma_{\ell}) \to 0$ along some sequence $\sigma_{\ell} \to 1/2^+$, almost surely.  We improve this by showing the same holds along any sequence tending to $1/2^+$; that is,
    $F(\sigma) \to 0$ as $\sigma \to 1/2^+$, almost surely. 
    As this property of $F$ is frequently stated but with no certain reference, as far as we could tell,
    this seems to fill an apparent small gap in the literature.  To this end, we followed the suggestion 
    in \cite[Remark 2.1]{sign_changes} which requires the following key ingredient, proved in 
    \S{\ref{sec:thm2}}.
\begin{theorem}\label{thm:2}
    Let $\epsilon_1$ and $c_1$ be any constants satisfying $\epsilon_1 > 0$ and $c_1 > \sqrt{2}$.  
    For almost all $f$ there exists a constant $\sigma_0 > 1/2$, which may depend on $f$, such that
    \[
    \sum_{p} \frac{f(p)}{p^{\sigma}} \leq c_1 \left( \log\left( \frac{1}{2 \sigma - 1} \right) \right)^{1/2 + \epsilon_1}
    \]
    for all $\sigma \in (1/2 , \sigma_0).$ Therefore, 
    \[
    \sum_{p} \frac{f(p)}{p^{\sigma}} \ll  \left( \log\left( \frac{1}{2 \sigma - 1} \right) \right)^{1/2 + \epsilon_1}
    \]
    as $\sigma \to 1/2^+,$ almost surely.
\end{theorem}

\noindent
Second, the assertion in \eqref{eq:facts1} is used in \cite{sign_changes} to show that for any fixed real $t \neq 0$, 
\begin{equation}\label{Ft divergence}
\limsup_{\sigma \to 1/2^+} |F(\sigma + it)| = \infty,
\end{equation}
almost surely.  This, however, comes with no control over the rate of divergence, making \eqref{Ft divergence} of limited use in our context as we require more quantitative information. In fact, there appears little hope of gaining such control over the rate of divergence without a new approach. To circumvent this obstacle, we utilize a result of Harper on supremum of Gaussian processes \cite{harper2013}. This allows us to leverage the choice of $t$ to gain definitive information on the rate of divergence. Specifically,  
instead of considering lower bounds on $|F(\sigma + it)|$ for fixed $t\ne0$ as $\sigma\to 1/2^+$, we consider
\begin{equation}\label{eq:gauss}
\sup_{1 \leq t \leq T(\sigma)} |F(\sigma + it)|
\end{equation}
as $\sigma \to 1/2^+$, where $T$ is a real-valued function satisfying $T(\sigma) \to \infty$. By choosing an appropriate sequence $\sigma_k \to 1/2^+$ and taking 
$$T(\sigma) = 2\log^2\left( \frac{1}{\sigma - 1/2} \right),$$ 
we obtain a fairly subtle almost sure lower bound on the supremum \eqref{eq:gauss}. 
This gives us sufficient control over the rate of divergence, as detailed in \S{\ref{sec:harper}}.

Let us outline the proof of Theorem~\ref{thm:mainresult}. 
We first restrict to a subset of $f$'s satisfying certain desirable properties that we can show hold almost surely. 
In particular, the $f$'s in our subset satisfy the almost sure lower bound on Gaussian processes related to (\ref{eq:gauss}), as $\sigma\to 1/2^+$. We  denote this lower bound
by $L(\sigma)$. Also, the $f$'s in our subset all have corresponding random Dirichlet series $F$  
that satisfy $F(\sigma) \to 0$ as $\sigma \to 1/2^+$.

Following the notation in Theorem~\ref{thm:mainresult} we then assume, contrary to the assertion of the theorem,  
that $M_f(u)$ does not have a sign change on infinitely many 
intervals $[y_k, X_k]$.
While each $f$ in our subset may have a different infinite sequence $\mathcal{I}(f)$ of intervals $[y_k, X_k]$ where $M_f(u)$ has no sign change, it only matters that such a sequence exists for every $f$ in question. Using a certain ``flip trick'', our assumption to the contrary thus allows for an estimate of the form
\[
\int_1^{\infty} \frac{M_f(u)}{u^{1 + \sigma_k}} \ du \leq \int_1^{\infty} \frac{|M_f(u)|}{u^{1 + \sigma_k}} = O \Big( \log^3 y_k \Big),
\] 
valid for all $[y_k,X_k]$ in the sequence $\mathcal{I}(f)$, provided 
$k$ is sufficiently large. 

Put together, we therefore obtain the following chain of inequalities, valid for all $[y_k,X_k]$ in the sequence $\mathcal{I}(f)$ with $k$ sufficiently large,
\[
L(\sigma_k) \leq \sup_{1 \leq t \leq T(\sigma_k)} |F(\sigma_k + it)| \leq \int_1^{\infty} \frac{|M_f(u)|}{u^{1 + \sigma_k}} \ du \leq C_2 \log^3 y_k,
\]
where $C_2 > 0$ is an absolute constant. Finally, using an explicit form for $L(\sigma)$ supplied by \cite{harper2013}, we are able to arrive at a contradiction by taking $k$ even larger if necessary. Consequently, once $k$ is past a certain threshold $k_0$ (depending on $f$), each of the intervals $[y_k, X_k]$ must contain a sign change of $M_f(u)$.

\begin{comment}

More precisely, we take a Rademacher random variable $f$ that satisfies the upper bound, for any $\epsilon > 0$
\[
M(x) = O \Big( \sqrt{x} (\log \log x)^{2 + \epsilon} \Big),
\]
given in the work of Lau, Tenenbaum and Wu \cite{tenenbaum} as well as the associated Dirichlet series satisfying $F(\sigma) \to 0$ as $\sigma \to 1/2^+.$ The first property holds almost surely by the previously mentioned work. The second property holds almost surely for only a specific sequence $\sigma \to 1/2^+$ (Lemma 2.6 of \cite{sign_changes}). We follow the advice in Remark 2.1 of \cite{sign_changes} to prove an extension to get $F(\sigma) \to 0$ as any sequence $\sigma \to 1/2^+$, almost surely. The key ingredient to prove this comes in the form of our second theorem. 

Since both these properties hold almost surely, we may assume the $f$ we are working with satisfies them. Furthermore, under these assumptions, we show that such an $f$ satisfies
\[
\int_1^{\infty} \frac{|M(\sigma)|}{u^{1 + \sigma}} \ du = O \Big( \log^3 y \Big)
\]
for $\sigma \to 1/2^+$ for $y$ and $X$ sufficiently large.

By Harper's work on Gaussian processes, \textcolor{red}{(I still need to add to this part... Like i need to talk about what is happening with Harper's work)}

\end{comment}

We remark that our lower bound on the sign counting function $V_f(x)$ in Corollary \ref{cor:1} is probably far from the truth. 
This is motivated by two observations. First, Kaczorowski and Pintz~\cite{deterministic_case} showed that $\sum_{n \leq u} \mu(n)$ has at least $(\gamma_1/\pi - o(1))\log x$ sign changes in the interval $[1, x]$ as $x \to \infty$, where $\gamma_1=14.134725\ldots$ is the height of the first non-trivial zeta zero. If $M_f(x)$ is an accurate random  model for $\sum_{n\le x} \mu(n)$, then an almost sure lower bound on $V_f(x)$ of comparable quality can be expected to hold.
Second, let $\{X(n)\}_{n \in \mathbb{N}}$ be a family of independent and identically distributed random variables uniformly distributed on $\{\pm 1\}$. So, there is no multiplicative dependence. Let $S(u) = \sum_{n \leq u} X(n)$ be the corresponding partial sum.  In this (admittedly quite different) model, it is known with high probability that the number of sign changes of $S(u)$ up to $x$ grows like a multiple of $\sqrt{x}$ as $x \to \infty$; see Chapter III, Section 6, Theorem 2 \cite{feller}.

\section{Proof of Corollary \ref{cor:1}}

\begin{proof}

    In Theorem \ref{thm:mainresult}, fix any $c > 2$ and $A_1 > 1$ and let $A_0 = \frac{1}{7}$. Then, for almost all $f$ there exists a $k_0 := k_0(f, c, A_1) \geq 1$ such that $M_f(u)$ has a sign change in the interval $[y_k, X_k]$ for each $k \geq k_0$, where
    \begin{align*}
        y_k & = \exp \left( \exp \left( \frac{1}{7} \exp ( k^c ) - A_1 k^c \right) \right) \\
        X_k & = \exp \left( \exp \left( \frac{1}{7} \exp ( k^c ) \right) \right).
    \end{align*}
    
    We will show that the intervals $[y_k, X_k]$ are disjoint once $k$ is large enough (depending on $A_1$). 
    Since both $y_k$ and $X_k$ are strictly increasing, it suffices to show that $X_k < y_{k+1}$ for $k$ large enough.
    This in turn follows if we show that
    \begin{equation}\label{eq:inequalitydumb}
    \frac{1}{7} \exp\left( (k+1)^c \right) - A_1 (k+1)^c > \frac{1}{7} \exp(k^c).
    \end{equation}
    To this end, let $\phi(x):= \frac{1}{7}\exp(x^c)$, so that the inequality \eqref{eq:inequalitydumb} reads
    $$\phi(k + 1) - \phi(k)> A_1 (k+1)^c.$$ 
    By the mean-value theorem, for any $k \geq 1$,
    \[
    \phi(k + 1) - \phi(k) \geq \min_{x \in [k, k+1]} \phi'(x)= \min_{x \in [k, k+1]} \frac{1}{7} c x^{c-1} \exp(x^c) 
     = \frac{1}{7} c k^{c-1} \exp(k^c).
    \]
    In the last step we used that $\frac{1}{7} cx^{c-1}\exp(x^c)$ is increasing in $x$, which holds given our choice $c > 2$. 
    Since for $k$ large enough (depending on $c$ and $A_1$), 
    \[
    \frac{1}{7}ck^{c-1} \exp(k^c) > A_1 (k+1)^c.
    \]
   the inequality \eqref{eq:inequalitydumb} follows. So, by taking $k_0$ even larger if necessary,
   the intervals $[y_k, X_k]$ are disjoint once $k\ge k_0$.

    Let $f$ be any multiplicative function in the full measure set furnished by Theorem \ref{thm:mainresult}. 
    Let $V_f(x)$ denote the number of sign changes less than $x$ for the summatory function $M_f(u)$. Since the intervals $[y_k, X_k]$ are disjoint once $k\ge k_0$, we can get a lower bound on $V_f(x)$ by counting those intervals with $X_k\le x$. 
    For $x > 10^4$, say, $X_k\le x$ is ensured if
    \[
    k \leq \left(\log \left(7 \log \log x \right) \right)^{1/c}.
    \]
    Thus, for each $f$ in the full measure set supplied by Theorem \ref{thm:mainresult}, we have
    \[
    V_f(x) \geq \left(\log \left(7 \log \log x \right) \right)^{1/c} - C,
    \]
    where $C= k_0$ is the point where the $[y_k, X_k]$ are guaranteed to all be disjoint.
\end{proof}

\section{Required Propositions for Proof of Theorem \ref{thm:mainresult}}

\begin{prop}[Theorem 1.1, \cite{tenenbaum}]\label{prop:bigO_result}
    Let $\epsilon > 0$. Then, for almost all random multiplicative functions $f$, there exists $x_0 := x_0(f) \geq 1$ such that
    \[
    |M(x)| \leq 4 \sqrt{x} (\log \log x)^{2 + \epsilon}.
    \]
    for all $x \geq x_0.$
\end{prop}

\begin{remark}
    A recent and exciting result of Caich \cite{caich2023} improves the bound to
    \[
    |M(x)| \ll \sqrt{x} (\log \log x)^{1/4 + \epsilon},
    \]
    almost surely. This matches an almost sure lower bound of Harper in \cite{harper2023almost}. Therefore, this result concludes the 80-year story of proving a variant of the law of iterated logarithms for $M(x)$ which started in 1944 with the work of Wintner \cite{wintner1944}.
\end{remark}

\begin{prop}\label{prop:sigmagoing1/2}
    The random Dirichlet series $F(s)$ satisfies
    \[
    F(\sigma) = (2\sigma - 1)^{1/2 + o(1)}
    \]
    as $\sigma \to 1/2^+$, almost surely.
\end{prop}

\begin{proof}
    Since $F(\sigma)$ has an Euler product for $\sigma > 1/2$ almost surely \cite{wintner1944}, we can take the logarithm and expand to get
    \begin{align*}
        \log F(\sigma) & = \log \prod_p \left(1 + \frac{f(p)}{p^{\sigma}} \right) \\
               & = \sum_p \frac{f(p)}{p^{\sigma}} - \frac{1}{2} \sum_{p} \frac{1}{p^{2\sigma}} + O(1) \\
               & = \sum_p \frac{f(p)}{p^{\sigma}} - \frac{\log \zeta(2\sigma)}{2} + O(1),
    \end{align*}
    almost surely. The logarithm branch refers to one where $\lim_{\sigma\to +\infty} \log F(\sigma)=0$. 
    Fix any positive $\epsilon_1 <1/2$ and any $c_1>\sqrt{2}$. By Theorem \ref{thm:2}, there is  almost surely 
    a constant $\sigma_0:=\sigma_0(f)>1/2$ such that 
    \[
    \sum_{p} \frac{f(p)}{p^{\sigma}} \le c_1 \left( \log \left( \frac{1}{2\sigma - 1} \right) \right)^{1/2 + \epsilon_1}
    \]
    for all $\sigma\in (1/2,\sigma_0)$. Furthermore, by Proposition \ref{prop:zetaasym},
    \[
    \log \zeta(2\sigma) \sim \log \left( \frac{1}{2\sigma - 1} \right)
    \]
    as $2\sigma \to 1^+$. Therefore, almost surely, as $\sigma \to 1/2^+$,
    \[
    \log F(\sigma) \le c_1 \left( \log \left( \frac{1}{2\sigma - 1} \right) \right)^{1/2 + \epsilon_1} - \frac{1+o(1)}{2} \log \left( \frac{1}{2\sigma - 1} \right).
    \]
    Since $\epsilon_1<1/2$, almost surely $\log F(\sigma)= (1/2 + o(1)) \log (2\sigma - 1)$ as $\sigma \to 1/2^+$. So, exponentiating both sides in the last asymptotic relation, the proposition follows.

\end{proof}

\begin{remark}
    The proof of Proposition~\ref{prop:sigmagoing1/2} is included as a matter of convenience in \S{\ref{sec:thm2}}. The proof is essentially identical to that of Lemma 2.6 in \cite{sign_changes} except we utilized our Theorem \ref{thm:2} which is a stronger form of Lemma 2.1 in \cite{sign_changes}.
\end{remark}

The following proposition is proved in \S{\ref{sec:harper}}.

\begin{prop}\label{prop:lowerboundresult}
    Let $c$, $C_0$, $C_1$, and $C_2$ be any constants such that 
    $c > 2$, $0<C_0 < 1/2,$ $C_1 > 1$, and $-1.765<C_2< -1.419$. Let
    \[
    \sigma_k := \frac{1}{2} + \frac{1}{\exp \left( \exp(k^c) \right)}.
    \]
    For almost all random multiplicative functions $f$ there exists $k_0 := k_0(f, c, C_0, C_1, C_2) \geq 1$ such that
    \[
    \exp\left( C_0 \log \left( \frac{1}{\sigma_k - 1/2} \right) - C_1 \log \log \left( \frac{1}{\sigma_k - 1/2} \right) + C_2  \right) \leq \sup_{1 \leq t \leq 2 \log^2 \left( \frac{1}{\sigma_k - \frac{1}{2}} \right)} |F(\sigma_k + it)|,
    \]
  for each $k \geq k_0.$
\end{prop}

\section{Proof of Theorem \ref{thm:mainresult}}

\begin{proof}
    Take any $c > 2$, $C_0 \in (0, 1/2)$, $C_1 > 1$ and $C_2 \in (-1.765, -1.418)$. For $k\ge 1$, let 
    \[
    \sigma_k := \frac{1}{2} + \frac{1}{\exp \left(\exp(k^c) \right)},\qquad
        X_k := \exp \left( \frac{1}{(\sigma_k - 1/2)^2} \right).
    \]
    Let $\mathcal{F}$ denote the set of random multiplicative functions $f$ satisfying the almost sure assertions in Propositions \ref{prop:bigO_result}, \ref{prop:sigmagoing1/2} and \ref{prop:lowerboundresult}. So, $\mathcal{F}$ has full measure. 
    
    As $k \to \infty$, $\sigma_k \to 1/2^+$ and $X_k \to \infty$. Therefore, by propositions \ref{prop:bigO_result}, \ref{prop:sigmagoing1/2}, and \ref{prop:lowerboundresult},  given $f \in \mathcal{F}$ there exists a constant 
    \begin{equation*}\label{k0 def}
        k_0 := k_0(f, c, C_0, C_1, C_2) \geq 1 
    \end{equation*}
    such that, generously,
    \begin{equation}\label{eq:upperboundinequality}
        |M_f(x)|  \leq 4 \sqrt{x} \log^2 x\qquad \text{for all }x \geq X_{k_0},
    \end{equation}
    (this is consequence of Proposition \ref{prop:bigO_result} with $k_0$ depending on $f$ and $c$), and
    \begin{equation}\label{eq:Fsigmabehavior}
        |F(\sigma_k)/\sigma_k| \leq 1/2\qquad \text{for every } k \geq k_0, 
    \end{equation}
    (this is a mild consequence of Proposition \ref{prop:sigmagoing1/2} with $k_0$ depending on $f$ and $c$), and
    \begin{align}\label{eq:suplowerbound}
        \exp &\left( C_0 \log \left(  \frac{1}{\sigma_k - 1/2} \right) - C_1 \log \log \left( \frac{1}{\sigma_k - 1/2} \right) + C_2  \right) \leq  \\ \nonumber 
       & \sup_{1 \leq t \leq 2 \log^2 \left( \frac{1}{\sigma_k - \frac{1}{2}} \right)} |F(\sigma_k + it)| \qquad 
       \text{for every }k \geq k_0,
    \end{align}
    (this is consequence of Proposition \ref{prop:lowerboundresult} 
    with $k_0$ depending on $f$, $c$, $C_0$, $C_1$, and $C_2$).

    Let $\{y_k\}$ be a strictly increasing sequence which will be chosen later subject to $y_k < X_k$ for all $k \geq 1$ and $y_k \to \infty$ as $k \to \infty$. Consider the two sequences of events
    \[
    \mathcal{A}_k^+ := \mathcal{A}(y_k,X_k)^+ = \left[ M_f(u) \geq 0 \text{  for } u \in [y_k, X_k] \right],
    \]
    and
    \[
    \mathcal{A}_k^- := \mathcal{A}(y_k,X_k)^- = \left[ M_f(u) \leq 0 \text{  for } u \in [y_k, X_k] \right],
    \]
    for $k \geq 1$. Furthermore, letting $\overline{A_k^+}$ denote the complement of $A_k^+$ (and similarly for $A_k^-$), consider the event
    \[
    \mathcal{A} = \liminf_{k \to \infty} \left( \overline{\mathcal{A}_k^+} \cap \overline{\mathcal{A}_k^-} \right).
    \]
    
    Note that $f$ belongs to $\mathcal{A}$ if and only if there exists $k' \geq 1$ such that $f \in \overline{\mathcal{A}_k^+} \cap \overline{\mathcal{A}_k^-}$ for all $k \geq k'$. Or, what is the same, if and only if $f$ is in neither $A_k^+$ nor $A_k^-$ for all $k\ge k'$. In particular, if $f\in \mathcal{A}$, then $M_f(u)$ has a sign change in the intervals $[y_k, X_k]$ for all $k\ge k'$. Informally, one may think of $\mathcal{A}$ as the event that
   $M_f(u)$ has a sign change in all but finitely many of the intervals $[y_k, X_k]$.

    Our plan is to choose the sequence $\{y_k\}$ so that if $f \in \mathcal{F}$ then $f \in \mathcal{A}$. Equivalently, so that $\mathcal{F}\subseteq \mathcal{A}$. Since $\mathbb{P}(\mathcal{F}) = 1$, we would thus obtain $\mathbb{P}(\mathcal{A}) = 1$, which would complete the proof of Theorem~\ref{thm:mainresult}.
    %\footnote{We emphasize that the lack of an ``almost surely" phrase is not a mistake here. We will construct a sequence $\{y_k\}$ so that $\mathcal{F} \subseteq \mathcal{A}$ rather than it being contained except on some null probability set.}
   
    To this end, we argue by contradiction. Let $f \in \mathcal{F}$ and suppose $f \not\in \mathcal{A}$. 
    In other words, suppose $f \in \mathcal{A}_k^+ \cup \mathcal{A}_k^-$ for infinitely many $k$.  
    Let $\mathcal{I}(f)$ denote the collection of $k$ such that $f \in \mathcal{A}_k^+ \cup \mathcal{A}_k^-$.
    Moreover, recall that $k_0$ is chosen to ensure that the conditions \eqref{eq:upperboundinequality}, \eqref{eq:Fsigmabehavior}, and \eqref{eq:suplowerbound} hold for $f$. 
    %The assumption of $f \in \mathcal{A}_k^+ \cup \mathcal{A}_k^-$ forces $f \not \in \mathcal{A}.$
    
    Starting with the supremum inequality in (\ref{eq:suplowerbound}), we find an upper bound for
    \[
    \sup_{1 \leq t \leq 2 \log^2 \left( \frac{1}{\sigma_k - \frac{1}{2}} \right)} |F(\sigma_k + it)|
    \]
    for each $k \in \mathcal{I}(f)$ sufficiently large. 
    %Since $F(s)$ is defined for $s$ with $\sigma > 1/2$ for almost all $f$ \cite{wintner1944}, we may assume that $F(s)$ is defined for the $f$ we chosen. 
    By an application of partial summation (Theorem 1.3, \cite{montgomery}), we have
    \begin{equation}\label{partial summation}
    F(\sigma + it) = (\sigma + it) \int_1^{\infty} \frac{M_f(u)}{u^{1 + \sigma + it}} \ du
     \end{equation}
    for any $\sigma > 1/2$ and $t \in \mathbb{R}$, almost surely. So,
\begin{align}
     \sup_{1 \leq t \leq 2 \log^2 \left( \frac{1}{\sigma_k - \frac{1}{2}} \right)} |F(\sigma_k + it)|  & \leq  \sup_{1 \leq t \leq 2 \log^2 \left( \frac{1}{\sigma_k - \frac{1}{2}} \right)} |\sigma_k + it| \int_1^{\infty} \frac{|M_f(u)|}{u^{1 + \sigma_k}} \ du \notag \\
        & \leq  C_4 \log^2 \left( \frac{1}{\sigma_k - 1/2} \right) \int_1^{\infty} \frac{|M_f(u)|}{u^{1 + \sigma_k}} \ du \label{eq:step2inequality}
\end{align}
for any constant $C_4 > 2$ and any $k$ sufficiently large\footnote{It may be necessary to make $k$ larger in order to ensure the inequality
\[
\left| \sigma_k + 2i \log^2 \left( \frac{1}{\sigma_k - 1/2} \right) \right| \leq C_4 \log^2 \left( \frac{1}{\sigma_k - \frac{1}{2}} \right) 
\]
holds for the chosen $C_4$.}. 

We now focus on the integral
\begin{equation}\label{eq:main_integral}
\int_1^{\infty} \frac{|M_f(u)|}{u^{1 + \sigma_k}} \ du.
\end{equation}
Since $f$ satisfies the upper bound in (\ref{eq:upperboundinequality}) for all $u \geq X_{k_0}$,
the tail of the integral is bounded by
\begin{align}
        \int_{X_k}^{\infty} \frac{|M_f(u)|}{u^{1 + \sigma_k}} \ du  & \leq 4 \int_{X_k}^{\infty} \frac{\sqrt{u} \log^2 u}{u^{1 + \sigma_k}} \ du \notag \\
                 & = \frac{4 \log^2 X_k}{(\sigma_k - 1/2)X_k^{\sigma_k - 1/2}} + \frac{8 \log X_k}{(\sigma_k - 1/2)^2 X_k^{\sigma_k - 1/2}} + \frac{8}{(\sigma_k - 1/2)^3 X_k^{\sigma_k - 1/2}} \notag \\
                 & = \frac{4 + 8(\sigma_k - 1/2) + 8(\sigma_k - 1/2)^2 }{(\sigma_k  -1/2)^5 \exp\Big( (\sigma_k - 1/2)^{-1} \Big)} = o(1) \notag
\end{align}
as $k \to \infty$ and since $\sigma_k \to 1/2^+.$ Therefore, taking $k_0$ even larger if necessary, the tail satisfies
\begin{equation}\label{eq:tailintegralestimate}
    \int_{X_k}^{\infty} \frac{|M_f(u)|}{u^{1 + \sigma_k}} \ du < 1/2
\end{equation}
for each $k \geq k_0.$
Next, we bound the initial part of integral (\ref{eq:main_integral}), namely
\[
\int_1^{X_k} \frac{|M_f(u)|}{u^{1 + \sigma_k}} \ du,
\]
for each $k \in \mathcal{I}(f)$ sufficiently large. We note
by \eqref{eq:Fsigmabehavior} that for all $k\ge k_0$, we have $|F(\sigma_k)/\sigma_k| \leq 1/2$. Therefore, by the partial summation formula \eqref{partial summation},
\[
\left| \int_1^{\infty} \frac{M_f(u)}{u^{1 + \sigma_k}} \ du \right| \le 1/2.
\]
Combining this with the reverse triangle inequality,
\[
\left| \int_1^{\infty} \frac{M_f(u)}{u^{1 + \sigma_k}} \ du \right|\ge 
\left| \int_1^{X_k} \frac{M_f(u)}{u^{1 + \sigma_k}} \ du \right|-
 \int_{X_k}^{\infty} \frac{|M_f(u)|}{u^{1 + \sigma_k}} \ du,
\]
as well as the tail integral estimate in (\ref{eq:tailintegralestimate}),
we obtain for each $k \geq k_0$,
\begin{equation}\label{Xk integral}
    \left| \int_1^{X_k} \frac{M_f(u)}{u^{1 + \sigma_k}} \ du \right| < 1.
\end{equation}

We now split the integral \eqref{Xk integral} at $y_k$ and use the reverse triangle inequality once again to get
\begin{equation}\label{eq:usefulbit}
\left| \int_{y_k}^{X_k} \frac{M_f(u)}{u^{1 + \sigma_k}} \ du \right| < \left| \int_1^{y_k} \frac{M_f(u)}{u^{1 + \sigma_k}} \ du \right| + 1.
\end{equation}
We bound the integral from 1 to $y_k$ using the upper bound in \eqref{eq:upperboundinequality}.  
This yields
\begin{align}
    \left| \int_1^{y_k} \frac{M_f(u)}{u^{1 + \sigma_k}} \ du \right| & \leq \int_{1}^{y_k} \frac{|M_f(u)|}{u^{1 + \sigma_k}} \ du \notag \\
                                & \le \int_1^{X_{k_0}} \frac{|M_f(u)|}{u^{1 + \sigma_k}} \ du + \mathds{1}_{y_k>X_{k_0}}\int_{X_{k_0}}^{y_k} \frac{|M_f(u)|}{u^{1 + \sigma_k}} \ du \notag \\
                                & \leq \int_1^{X_{k_0}} \frac{u}{u^{1 + \sigma_k}} \ du + \mathds{1}_{y_k>X_{k_0}} 4 \int_{X_{k_0}}^{y_k} \frac{\sqrt{u} \log^2 u}{u^{1 + \sigma_k}} \ du \notag \\
                                & \leq (X_{k_0} - 1) + \mathds{1}_{y_k>X_{k_0}}\frac{4}{3} \left( \log^3 y_k - \log^3 X_{k_0} \right) \notag\\
                                & \leq C_5 \log^3 y_k, \label{eq:jeb1}
\end{align}
for any constant $C_5 > 4/3$ and for each $k \geq k_0$, again taking $k_0$ larger if needed.

Now, take any $k \in \mathcal{I}(f)$. So, $f \in \mathcal{A}_k^+$ or $f \in \mathcal{A}_k^-$. In either case, $M_f(u)$ is of constant sign throughout $[y_k,X_k]$. Therefore,
\begin{equation}\label{abs val eq}
\left| \int_{y_k}^{X_k} \frac{M_f(u)}{u^{1 + \sigma_k}} \ du \right| = \int_{y_k}^{X_k} \frac{|M_f(u)|}{u^{1 + \sigma_k}} \ du.
\end{equation}
Combining \eqref{abs val eq}, \eqref{eq:jeb1}, and \eqref{eq:usefulbit}, we see that for $k \in \mathcal{I}(f)$ satisfying $k \geq k_0$ we may bound the initial part of the main integral (\ref{eq:main_integral}) by
\begin{align*}
    \int_1^{X_k} \frac{|M_f(u)|}{u^{1 + \sigma_k}} \ du & = \int_1^{y_k} \frac{|M_f(u)|}{u^{1 + \sigma_k}} \ du +  \int_{y_k}^{X_k} \frac{|M_f(u)|}{u^{1+ \sigma_k}} \ du \\
                    & = \int_1^{y_k} \frac{|M_f(u)|}{u^{1 + \sigma_k}} \ du +  \left| \int_{y_k}^{X_k} \frac{M_f(u)}{u^{1+ \sigma_k}} \ du \right| \\
                    & <  \int_1^{y_k} \frac{|M_f(u)|}{u^{1 + \sigma_k}} \ du + \left| \int_{1}^{y_k} \frac{M_f(u)}{u^{1+ \sigma_k}} \ du \right| + 1 \\
                    & \leq 2 C_5 \log^3 y_k + 1.
\end{align*}
The first inequality above comes from applying bound \eqref{eq:usefulbit} to the second integral and the second inequality comes from applying bound \eqref{eq:jeb1} to both integrals. 

Overall, we have that $f\in \mathcal{F}$ satisfies
\begin{equation}\label{eq:initialpart}
\int_1^{X_k} \frac{|M_f(u)|}{u^{1 + \sigma_k}} \ du \leq C_6 \log^3 y_k,
\end{equation}
where $C_6 > 2C_5 > 8/3$ and for each $k \in \mathcal{I}(f)$ satisfying $k \geq k_0$, taking $k_0$ larger if necessary.
\begin{comment}
If $f \in \mathcal{A}_k^+$, then
\[
\int_1^{X_k} \frac{|M(u)|}{u^{1 + \sigma_k}} \ du = \int_1^{y_k} \frac{|M(u)|}{u^{1 + \sigma_k}} \ du  + \int_{y_k}^{X_k} \frac{M(u)}{u^{1+ \sigma_k}} \ du.
\]
The bound in (\ref{eq:jeb1}) works for the first integral while combining the bound in (\ref{eq:usefulbit}) with (\ref{eq:jeb1}) results with an identical bound for the second integral after taking $k_0$ larger if necessary. So, we have
\[
\int_1^{X_k} \frac{|M(u)|}{u^{1 + \sigma_k}} \ du \leq 2C_5 \log^3 y_k
\]
for each $k \geq k_0$. The second case is similar as if $f \in \mathcal{A}_k^-$ then
\[
\int_1^{X_k} \frac{|M(u)|}{u^{1 + \sigma_k}} \ du = \int_1^{y_k} \frac{|M(u)|}{u^{1 + \sigma_k}} \ du  - \int_{y_k}^{X_k} \frac{M(u)}{u^{1+ \sigma_k}} \ du.
\]
We can apply the triangle inequality and use the same bounds as the previous case to get
\[
    \int_1^{X_k} \frac{|M(u)|}{u^{1 + \sigma_k}} \ du \leq \int_1^{y_k} \frac{|M(u)|}{u^{1 + \sigma_k}} \ du  + \left| \int_{y_k}^{X_k} \frac{M(u)}{u^{1+ \sigma_k}} \ du \right| \leq 2C_5 \log^3 y_k.
\]
Since both cases have the same result, we have that our choice of $f$ satisfies
\begin{equation}\label{eq:initialpart}
\int_1^{X_k} \frac{|M(u)|}{u^{1 + \sigma_k}} \ du \leq C_6 \log^3 y_k
\end{equation}
for $C_6 := 2C_5 > 8/3$ and each $k \geq k_0$.
\end{comment}
Combining the estimates for the initial part (\ref{eq:initialpart}) and the tail part (\ref{eq:tailintegralestimate}) of integral (\ref{eq:main_integral}), we obtain
\[
    \int_1^{\infty} \frac{|M_f(u)|}{u^{1 + \sigma_k}} \ du \leq C_6 \log^3 y_k
\]
for each $k \in \mathcal{I}(f)$ with $k \geq k_0$. Returning to the inequality in (\ref{eq:step2inequality}), we thus have
\begin{equation}\label{eq:thirdinequality}
    \sup_{1 \leq t \leq 2 \log^2 \left( \frac{1}{\sigma_k - \frac{1}{2}} \right)} |F(\sigma_k + it)|  \leq C_7 \log^2 \left( \frac{1}{\sigma_k - 1/2} \right) \log^3 y_k,
\end{equation}
where $C_7 = C_4C_6$. The inequality (\ref{eq:thirdinequality}) together with the supremum inequality in \eqref{eq:suplowerbound} therefore give that
\[
\exp\left( C_0 \log \left( \frac{1}{\sigma_k - 1/2} \right) - C_1 \log \log \left( \frac{1}{\sigma_k - 1/2} \right) + C_2  \right) \leq C_7 \log^2 \left( \frac{1}{\sigma_k - 1/2} \right) \log^3 y_k
\]
holds for each $k \in \mathcal{I}(f)$ with $k \geq k_0$.
After some rearranging, we hence deduce under our hypothesis to the contrary that
\begin{equation*}\label{A2A1A0}
\exp \left( A_2 \left( \frac{1}{\sigma_k - 1/2} \right)^{A_0} \log^{-A_1} \left( \frac{1}{\sigma_k - 1/2} \right) \right) \leq y_k,    
\end{equation*}
where $A_0 := C_0/3$, $A_1 := (C_1 + 2)/3$, $A_2:=e^{C_2/3}(C_7)^{-1/3}$, and $k \in \mathcal{I}(f)$ with $k \geq k_0.$ Therefore, we arrive at a contradiction if $k\ge k_0$, $k\in \mathcal{I}(f)$, and $y_k$ satisfies
\[
    y_k < \exp \left( A_2\left( \frac{1}{\sigma_k - 1/2} \right)^{A_0} \log^{-A_1} \left( \frac{1}{\sigma_k - 1/2} \right) \right).
\]
So, it must be that $f \in \overline{\mathcal{A}_k^+} \cap \overline{\mathcal{A}_k^-}$ for each $k$ such that $k \geq k_0$ and $k \in \mathcal{I}(f)$. In other words, it must be that $f\in \mathcal{A}$, as we wished to prove.

Lastly, given the conditions on the constants $C_1,\ldots,C_7$, we have the constraints $A_0 \in (0, 1/6)$, $A_1 > 1$ and $A_2 > 0$. So, setting $A_2=2$ say, and letting $k_0$ be resulting threshold on $k$,  we  
still obtain a contradiction if $f\in \mathcal{F}$, $f\not\in \mathcal{A}$, $k\ge k_0$, $k\in \mathcal{I}(f)$, 
and on taking $y_k$ to be of the simpler form
\[
 y_k = \exp \left( \left(\frac{1}{\sigma_k - 1/2} \right)^{A_0} \log^{-A_1} \left( \frac{1}{\sigma_k - 1/2} \right) \right).
\]
\end{proof}

\section{Proof of Theorem \ref{thm:2}}\label{sec:thm2}

Let
\begin{equation}\label{P prime sum}
    P(\sigma) := \sum_{p} \frac{f(p)}{p^{\sigma}} \qquad \text{and} \qquad
    \overline{P}(\sigma) := \frac{P(\sigma)}{\sqrt{\mathbb{E}[P(\sigma)^2]}}.
\end{equation}
By the Kolmogorov Three-Series Theorem, which is stated as 
Proposition \ref{prop:twoseries} in Appendix B, $P(\sigma)$ is convergent on the half-plane $\sigma > 1/2$ for almost all $f$.
Moreover, for such $f$,  $P(\sigma)$ defines an analytic function in that half-plane,  
as follows from basic properties of Dirichlet series; see \cite[\S{11}]{apostol_1976}.

Since $\mathbb{E}[f(p)] = 0$, we have for any $\sigma > 1/2$ that $\mathbb{E}[P(\sigma)] = 0$ and
\begin{equation}\label{Psigma var}
\mathbb{E} [ P(\sigma)^2] = \sum_p \frac{1}{p^{2\sigma}}.    
\end{equation}
Moreover, by Proposition \ref{prop:zetaasym},
\begin{equation}\label{eq:behavior_of_P}
\sum_{p} \frac{1}{p^{2\sigma}} \sim \log \left( \frac{1}{2\sigma - 1} \right)
\end{equation}
as $\sigma \to 1/2^+$. So, put together, Theorem \ref{thm:2} is equivalent to the statement that for any $\epsilon_1 > 0$, almost surely,
\[
\overline{P}(\sigma) \ll \mathbb{E}[P(\sigma)^2]^{\epsilon_1}
\]
as $\sigma \to 1/2^+$. 

To prove this statement, we follow the suggestion in \cite[Remark 2.1]{sign_changes}. So, in effect, we  
complete steps 2--4 in \cite{LIL}. 
It is worth mentioning that the context of \cite{LIL} is to obtain 
a law of iterated logarithms for random Dirichlet series $\sum_{n \geq 1} X_n/n^{s}$, where the $X_n$ are i.i.d. random variables uniformly distributed on $\{\pm 1\}$. 

To this end, we will use the following two lemmas, corresponding to steps 2 and 3 in \cite{LIL}, which we prove later in this section.

\begin{lemma}[Step 2, \cite{LIL}]
\label{lem:step2}
    Let $\epsilon$ be a positive constant less than $2$, and set $\delta := \epsilon/2$. For $\ell\ge 1$, let
    \begin{equation}\label{sigma ell def}
        \sigma_{\ell} := \frac{1}{2} + \frac{1}{2\exp(\ell^{1 - \delta})}.
    \end{equation}
    For any $\gamma > 0$, we have
    \[
    \limsup_{\ell \to \infty} \frac{\overline{P}(\sigma_{\ell})}{\sqrt{2\, \mathbb{E}[P(\sigma_{\ell})^2]^{\epsilon}}} \leq \sqrt{1 + \gamma},
    \]
    almost surely.
\end{lemma}

\begin{lemma}[Step 3, \cite{LIL}]\label{lemma3.4}\label{lem:step3}
    Let $\epsilon$ be a positive constant less than $2$, and set $\delta := \epsilon/2$. Let $\sigma_{\ell}$ be defined as in \eqref{sigma ell def}. There exists a universal constant $C > 0$ such that for almost all random multiplicative functions $f$ there is a contant $\ell_0 := \ell_0(f) \geq 1$ such that
    \[
    \max_{\sigma \in [\sigma_{\ell}, \sigma_{\ell-1}]} |P(\sigma) - P(\sigma_{\ell})| \leq C,
    \]
    for all $\ell \geq \ell_0$.
\end{lemma}

\begin{proof}[Proof of Theorem \ref{thm:2}]
In preparation for applying lemmas \ref{lem:step2} and \ref{lem:step3}, pick any $\gamma>0$ and any $0<\epsilon<2$. 
Set $\delta = \epsilon/2$ and 
\begin{equation}\label{eq:sigmak}
\sigma_{\ell} = \frac{1}{2} + \frac{1}{2\exp(\ell^{1 - \delta})}.
\end{equation}
By Lemma \ref{lemma3.4}, there exists a universal constant $C > 0$ such that for almost all random multiplicative functions $f$ there is a constant $\ell_0 \geq 1$, depending on $f$, so that
\[
\max_{\sigma \in [\sigma_{\ell}, \sigma_{\ell-1}]} |P(\sigma) - P(\sigma_{\ell})| \leq C
\]
for all $\ell \geq \ell_0$ as $\sigma_{\ell} < 1$. Let $f$ be any random multiplicative function satisfying, both, this property 
as well as the bound in Lemma \ref{lem:step2} (with the same $\gamma$ and $\epsilon$ we picked at the beginning).
%Notice that for any sequence $\sigma \to 1/2^+$, we can take $\sigma$ close enough to $1/2$ so that $\sigma \leq \sigma_{k_0}$.
Writing
\begin{equation}\label{eq:2parts}
    \frac{\overline{P}(\sigma)}{\sqrt{2\,\mathbb{E}[P(\sigma)^2]^{\epsilon}}} = \frac{P(\sigma_{\ell})}{\sqrt{2\, \mathbb{E}[P(\sigma)^2] \mathbb{E}[P(\sigma)^2]^{\epsilon}}} + \frac{P(\sigma) - P(\sigma_{\ell})}{\sqrt{2\, \mathbb{E}[P(\sigma)^2] \mathbb{E}[P(\sigma)^2]^{\epsilon}}},
\end{equation}
our plan is to bound each of the ratios on the right-side of \eqref{eq:2parts}.

Since $f$ satisfies the bound in Lemma \ref{lem:step2}, we have, 
on increasing $\ell_0$ if necessary, that for any constant $\gamma_1>\gamma$ 
the first ratio in \eqref{eq:2parts} is
\begin{align}\label{eq:2parts 1}
    &\leq \sqrt{1 + \gamma_1} \frac{\sqrt{\mathbb{E}[P(\sigma_{\ell})^2] \mathbb{E}[P(\sigma_{\ell})^2]^{\epsilon}}}{\sqrt{\mathbb{E}[P(\sigma)^2] \mathbb{E}[P(\sigma)^2]^{\epsilon}}}
\end{align}
for $\ell\ge \ell_0$. Now, suppose $\sigma \leq \sigma_{\ell-1}$.
By the formula \eqref{Psigma var}, $\mathbb{E}[P(\sigma)^2] \geq \mathbb{E}[P(\sigma_{\ell-1})^2]$. So, the expression in \eqref{eq:2parts 1} is
\begin{align*}
     &\leq \sqrt{1 + \gamma_1} \left( \frac{\mathbb{E}[P(\sigma_{\ell})^2]}{\mathbb{E}[P(\sigma_{\ell-1})^2]} \right)^{(1 + \epsilon)/2}.
\end{align*}
Furthermore, by the asymptotic for the variance of $P(\sigma_k)$ given in (\ref{eq:behavior_of_P}), 
\begin{align*}
\mathbb{E}[P(\sigma_{\ell})^2] & = \sum_{p} \frac{1}{p^{2\sigma_{\ell}}} \sim \log \left( \frac{1}{2\sigma_{\ell} - 1} \right) = \ell^{1- \delta}
\end{align*}
as $\ell \to \infty$. Therefore,
\[
\lim_{\ell \to \infty} \left( \frac{\mathbb{E}[P(\sigma_{\ell})^2]}{\mathbb{E}[P(\sigma_{\ell-1})^2]} \right)^{(1 + \epsilon)/2} = 1.
\]
As a result, and on increasing $\ell_0$ again if needed, we have for any constant $\gamma_2>\gamma_1$ that 
the first ratio in \eqref{eq:2parts} satisfies
\begin{equation} \label{first ratio}
    \limsup_{\sigma \to 1/2^+} \frac{P(\sigma_{\ell})}{\sqrt{2\, \mathbb{E}[P(\sigma)^2] \mathbb{E}[P(\sigma)^2]^{\epsilon}}} \leq \sqrt{1 + \gamma_2},
\end{equation}
for $\ell\ge \ell_0$. 

The second ratio in \eqref{eq:2parts} can be dealt with in a similar fashion. Since $f$ satisfies the bound in
Lemma \ref{lemma3.4}, we have
\begin{align*}
\max_{\sigma \in [\sigma_{\ell}, \sigma_{\ell-1}]} \left| \frac{P(\sigma) - P(\sigma_{\ell})}{\sqrt{2\, \mathbb{E}[P(\sigma)^2] \mathbb{E}[P(\sigma)^2]^{\epsilon}}} \right| 
                    & \leq \frac{C}{{\sqrt{\mathbb{E}[P(\sigma_{\ell-1})^2] \mathbb{E}[P(\sigma_{\ell-1})^2]^{\epsilon}}}}.
\end{align*}
for $\ell \ge \ell_0$. Moreover, since 
$\mathbb{E}[P(\sigma_{\ell})^2] \to \infty$ as $\ell \to \infty$, the last quantity tends to $0$ with $\ell$. 
Thus, for any $\gamma_3>0$, and on increasing $\ell_0$ yet again if needed, we obtain
\begin{equation}\label{second ratio}
     \limsup_{\sigma \to 1/2^+} \left|\frac{P(\sigma) - P(\sigma_{\ell})}{\sqrt{2\, \mathbb{E}[P(\sigma)^2] \mathbb{E}[P(\sigma)^2]^{\epsilon}}}\right| < \gamma_3,
\end{equation}
for $\ell\ge \ell_0$. 

Put together, using the information \eqref{first ratio} and \eqref{second ratio} back in
\eqref{eq:2parts}, we have that 
\begin{align*}
\limsup_{\sigma \to 1/2^+} \frac{\overline{P}(\sigma)}{\sqrt{2\,\mathbb{E}[P(\sigma)^2]^{\epsilon}}} \leq \sqrt{1 + \gamma_2}+\gamma_3,
\end{align*}
for $\ell\ge \ell_0$. 
Since $\gamma_2$ and $\gamma_3$ can be made arbitrarily small, we deduce for any constant $c_1 > \sqrt{2}$ that
\begin{equation}\label{Pbar sigma}
    \overline{P}(\sigma) \leq c_1\, \mathbb{E}[P(\sigma)^2]^{\epsilon/2} \sim c_1 \left( \log \left( \frac{1}{2\sigma - 1} \right) \right)^{\epsilon/2},
\end{equation}
where the inequality holds for all $\sigma>1/2$ sufficiently close to $1/2$, and the 
asymptotic is taken  as $\sigma\to 1/2^+$. Letting $\epsilon_1 := \epsilon/2$ completes the proof.  Lastly, although the bound \eqref{Pbar sigma} is derived assuming that the constant $\epsilon$ satisfies $0<\epsilon<2$, it is clear that \eqref{Pbar sigma} still holds even if $\epsilon \ge 2$.
    
\end{proof}

\subsection{Proof of Lemma \ref{lem:step2}}

\begin{proof}

    Let $\gamma$ be any positive constant. By formula \eqref{Psigma var} and Hoeffding's inequality, which is stated as Proposition \ref{thm:hoeffding} in Appendix B, we have
    \begin{equation}\label{probability comparison}
         \mathbb{P} \left( \overline{P}(\sigma_{\ell}) \geq \sqrt{2(1 + \gamma) \mathbb{E}[P(\sigma_{\ell})^2]^{\epsilon}} \right) \leq \exp \left( - (1 + \gamma)\mathbb{E}[P(\sigma_{\ell})^2]^{\epsilon} \right).
    \end{equation}
    Furthermore, by the asymptotic for $\mathbb{E}[P(\sigma)^2]$ in \eqref{eq:behavior_of_P}, and since $\sigma_{\ell} \to 1/2^+$ as $\ell \to \infty$,
    \[
    \mathbb{E}[P(\sigma_{\ell})^2] = \sum_{p} \frac{1}{p^{2\sigma_{\ell}}} \sim \log \left( \frac{1}{2 \sigma_{\ell} - 1} \right) = \ell^{1 - \delta}
    \]
    as $\ell \to \infty$. Thus
    \begin{equation}\label{limit comparison}
         \exp \left( - (1 + \gamma)\mathbb{E}[P(\sigma_{\ell})^2]^{\epsilon} \right)
                \sim  \exp \left( - (1 + \gamma)\ell^{(1 - \delta)\epsilon} \right).
    \end{equation}
    Since $\epsilon < 2$, $1-\delta >0$, and so the series $\sum_{k \geq 1} \exp( -(1 + \epsilon) k^{(1-\delta)\epsilon})$ converges, considering $\epsilon>0$.
    So, by \eqref{probability comparison}, \eqref{limit comparison}, and the limit comparison test, it follows that
    \[
    \sum_{k \geq 1}\mathbb{P}  \left( \overline{P}(\sigma_{\ell})  \geq \sqrt{2(1 + \gamma) \mathbb{E}[P(\sigma_{\ell})^2]^{\epsilon}} \right) < \infty.
    \]
    It therefore follows by the Borel-Cantelli lemma that
    \[
    \mathbb{P} \left( \limsup_{\ell \to \infty} \left[ \frac{\overline{P}(\sigma_{\ell})}{\sqrt{2\, \mathbb{E} {P(\sigma_{\ell})^2]^{\epsilon}}}} \geq \sqrt{1 + \gamma} \right] \right) = 0.
    \]
    Hence,
    \[
    \mathbb{P} \left( \liminf_{\ell \to \infty} \left[ \frac{\overline{P}(\sigma_{\ell})}{\sqrt{2\, \mathbb{E} {P(\sigma_{\ell})^2]^{\epsilon}}}} \leq \sqrt{1 + \gamma} \right] \right) = 1,
    \]
    which in turn implies
    \[
    \limsup_{\ell \to \infty} \frac{\overline{P}(\sigma_{\ell})}{\sqrt{2\, \mathbb{E} {P(\sigma_{\ell})^2]^{\epsilon}}}} \leq \sqrt{1 + \gamma},
    \]
    almost surely, as we wished to show.
    
\end{proof}

\subsection{Proof of Lemma \ref{lem:step3}}

\begin{proof}

We will prove this lemma by working with a certain decomposition of the intervals $[\sigma_{\ell}, \sigma_{\ell-1}]$, 
where $\ell \geq 2$. For $r \geq 1$, let
\[
D_{\ell,r} := \{\tau_{\ell,r}(n) : 0 \leq n \leq 2^r \}
\]
where
\[
\tau_{\ell,r}(n) := \sigma_{\ell} + \frac{n}{2^r}( \sigma_{\ell-1} - \sigma_{\ell}).
\]
So, $D_{\ell,r}$ denotes the collection of equidistant points with spacing $( \sigma_{\ell-1} - \sigma_{\ell})/2^r$ comprising our decomposition 
of the interval $[\sigma_{\ell}, \sigma_{\ell-1}]$.

Let $\lambda_r$ be a parameter which will be chosen later \textit{independent} of $\ell$. 
(The choice is given in \eqref{eq:lambdadef}.)
Also, recall the definition of $P(\sigma)$ in \eqref{P prime sum}, and consider the events
\[
\mathcal{A}_{\ell, r} := \left[ \max_{0 \leq n < 2^r} \left|P(\tau_{\ell,r}(n+1)) - P(\tau_{\ell,r}(n))\right| \geq \lambda_r \right].
\]
In addition, let us introduce the random variable
\[
U_{\ell}(f) := \min \left\{ u \in \mathbb{N} : f \in \bigcap_{r = u}^{\infty} \mathcal{A}_{\ell, r}^c \right\}.
\]
To help interpret $U_{\ell}(f)$, notice that $P(\sigma)$ is first ensured to vary by an amount $< \lambda_r$ 
between any two consecutive points in $D_{\ell,r}$
exactly when the spacing of the equidistant points is $( \sigma_{\ell-1} - \sigma_{\ell})/2^r$ with $r= U_{\ell}(f)$. Since the series 
$P(s)$ is analytic in the half-plane $\sigma>1/2$, $U_{\ell}(f)$ is finite for 
almost all $f$.

We will now construct the $\lambda_r$ so that
\begin{equation*}\label{eq:ukpp}
\mathbb{P} \left( \liminf_{\ell \to \infty} [U_{\ell} \leq 1] \right) = 1.
\end{equation*}
To this end, we note that 
\[
[U_{\ell} \leq 1] = \bigcap_{r = 1}^{\infty} \mathcal{A}_{\ell, r}^c\qquad \text{and so}\qquad
[U_{\ell} > 1] = [U_{\ell} \leq 1]^c = \bigcup_{r = 1}^{\infty} \mathcal{A}_{\ell, r}.
\]
Therefore,
\begin{align}\label{eq:borel-cantelli-goal}
\mathbb{P}([U_{\ell} > 1]) &\leq \sum_{r = 1}^{\infty} \mathbb{P}(\mathcal{A}_{\ell, r}) \notag \\
&\leq \sum_{r = 1}^{\infty} \sum_{n = 0}^{2^r - 1} \mathbb{P} \left( \left[ \left|P(\tau_{\ell,r}(n+1)) - P(\tau_{\ell, r}(n))\right| \geq \lambda_r \right] \right).
\end{align}

Now, let $f$ be such that $P(\sigma)$ is convergent in the half-plane $\sigma>1/2$. Then,
\[
P(\tau_{\ell,r}(n+1)) - P(\tau_{\ell,r}(n)) = \sum_{p} \left(\frac{1}{p^{\tau_{\ell,r}(n+1)}} - \frac{1}{p^{\tau_{\ell,r}(n)}} \right) f(p).
\]
Observing that the event
$\left[ \left|P(\tau_{\ell,r}(n+1)) - P(\tau_{\ell, r}(n))\right|\geq \lambda_r \right]$
is the union of the events 
$\left[ P(\tau_{\ell,r}(n+1)) - P(\tau_{\ell, r}(n))\geq \lambda_r \right]$ and
$\left[ P(\tau_{\ell,r}(n)) - P(\tau_{\ell, r}(n+1))\geq \lambda_r\right]$,
it follows by Hoeffding's inequality (Proposition \ref{thm:hoeffding}) that
\begin{equation*}
    \mathbb{P} \left( \left[ \left|P(\tau_{\ell,r}(n+1)) - P(\tau_{\ell, r}(n))\right|\geq \lambda_r \right] \right) \le
      2 \exp \left( - \frac{\lambda_r^2}{2 \sum_p \left(p^{-\tau_{\ell,r}(n+1)} - p^{-\tau_{\ell, r}(n)} \right)^2} \right).
\end{equation*}
Moreover, the function $g(\sigma)=p^{-\sigma}$ is differentiable everywhere. 
So, by the mean-value theorem applied to $g(\sigma)$ on the interval $[\tau_{\ell, r}(n), \tau_{\ell,r}(n+1)]$, 
there is a real number $\theta_{\ell,r,p}(n)$ such that  
$\tau_{\ell, r}(n)< \theta_{\ell,r,p}(n)< \tau_{\ell,r}(n+1)$ and
\begin{equation}\label{mvt app}
    \frac{1}{p^{\tau_{\ell,r}(n+1)}} - \frac{1}{p^{\tau_{\ell, r}(n)}} = (\tau_{\ell,r}(n+1) - \tau_{\ell, r}(n)) \frac{\log p}{p^{\theta_{\ell, r,p}(n)}}.
\end{equation}
So, combining our Hoeffding's estimate with
formula \eqref{mvt app} and the observations that
$\sigma_{\ell} < \theta_{\ell, r, p}(n)$ and
$\tau_{\ell, r}(n+1) - \tau_{\ell,r}(n) = (\sigma_{\ell-1} - \sigma_{\ell})2^{-r}$, by construction,
we deduce that $\mathbb{P} \left( \left[ \left|P(\tau_{\ell,r}(n+1)) - P(\tau_{\ell, r}(n))\right|\geq \lambda_r \right] \right)$
is 
\begin{align*}
& \leq 2 \exp \left( - \frac{\lambda_r^2}{2 (\tau_{\ell, r}(n+1) - \tau_{\ell, r}(n))^2 \sum_p (\log p)^2 p^{-2\sigma_{\ell}}} \right) \\
    & \leq 2 \exp\left( - \frac{4^r \lambda_r^2 }{2 (\sigma_{\ell-1} - \sigma_\ell)^2 \sum_p (\log p)^2 p^{-2\sigma_{\ell}}} \right),
\end{align*} 
independent of $n$. 
Combining the last bound with the bound on $\mathbb{P}([U_k > 1])$ in \eqref{eq:borel-cantelli-goal}, we get
\begin{align}
    \mathbb{P}([U_k > 1]) 
                & \le \sum_{r = 1}^{\infty} \sum_{n = 0}^{2^r -1} 2 \exp\left( - \frac{4^r\lambda_r^2}{2 (\sigma_{\ell-1} - \sigma_{\ell})^2 \sum_p (\log p)^2 p^{-2\sigma_{\ell}}} \right) \notag \\
                & = 2 \sum_{r = 1}^{\infty} 2^r\cdot  \exp\left( - \frac{4^r\lambda_r^2}{2 (\sigma_{\ell-1} - \sigma_{\ell})^2 \sum_p (\log p)^2 p^{-2\sigma_{\ell}}} \right) \notag \\
                & = 2 \sum_{r = 1}^{\infty} \exp\left( - \frac{4^r\lambda_r^2}{2 (\sigma_{\ell-1} - \sigma_{\ell})^2 \sum_p (\log p)^2 p^{-2\sigma_{\ell}}} + r \log 2 \right). \label{eq:bigterm}
\end{align}

The next step is to bound the denominator of the last expression in \eqref{eq:bigterm}.
First,, it follows by Lemma \ref{lem:claim1} that there is a universal constant $C_1 > 0$ such that the prime sum in said denominator
satisfies
\begin{equation}\label{eq:bound11}
    \sum_p \frac{\log^2 p}{p^{2\sigma_{\ell}}} \leq  \frac{C_1}{(2 \sigma_{\ell} - 1)^{2}}
\end{equation}
for each $\ell$ satisfying $\sigma_{\ell} < 1$. Second, in preparation for bounding the term $(\sigma_{\ell-1} - \sigma_{\ell})^2$, let
\[
\varphi(x) := \frac{1}{2} + \frac{1}{2 \exp(x^{1 - \delta})}.
\]
And notice that
$\varphi(\ell) = \sigma_{\ell}$, which follows from the definition of $\sigma_{\ell}$ in \eqref{sigma ell def}.
We note as well that
$0<1-\delta<1$ since $0<\epsilon<2$. So, by the mean-value theorem applied to $\varphi$ on 
the interval $[\ell-1, \ell]$, there is a real number $\theta_{\ell}$ such that 
$\ell-1<\theta_{\ell} <\ell$ and
\[
\sigma_{\ell-1} - \sigma_{\ell} = \frac{1-\delta}{2} \cdot \frac{1}{\exp(\theta_{\ell}^{1-\delta})}\cdot \frac{1}{\theta_{\ell}^{\delta}}.
\]
We can bound this as follows
\begin{align*}
\frac{1-\delta}{2} \cdot \frac{1}{\exp(\theta_{\ell}^{1-\delta}) }\cdot \frac{1}{\theta_{\ell}^{\delta}} & 
\leq \frac{1 - \delta}{2}\cdot \frac{1}{\exp((\ell-1)^{1-\delta}) }\cdot \frac{1}{(\ell-1)^{\delta}} \\
& = \frac{1 - \delta}{2} \cdot (2\sigma_{\ell-1} - 1)\cdot\frac{1}{(\ell-1)^{\delta}} 
 \leq \frac{2 \sigma_{\ell} - 1}{\ell^{\delta}}
\end{align*}
for $\ell\ge \ell_1$, where $\ell_1$ is absolute. Therefore,
\begin{equation}\label{eq:subtractionbound}
    \sigma_{\ell-1} - \sigma_{\ell} \leq \frac{2\sigma_{\ell} - 1}{\ell^{\delta}}
\end{equation}
for each $\ell \geq \ell_1$.
Combining the upper bounds \eqref{eq:bound11} and \eqref{eq:subtractionbound} 
with the bound for $\mathbb{P}([U_{\ell} > 1])$ in \eqref{eq:bigterm}, and noting that the 
term $(2\sigma_{\ell}-1)^2$ cancels, we  obtain
\[
\mathbb{P}([U_{\ell} > 1]) \leq  2 \sum_{r = 1}^{\infty} \exp\left( - \frac{4^r \ell^{2\delta}}{2C_1} \lambda_r^2 + r \log 2 \right)
\]
for each $\ell\ge \ell_1$. So, letting
\begin{equation}\label{eq:lambdadef}
    \lambda_r^2 = \frac{2C_1 r }{4^r}
\end{equation}
for $r \geq 1$, we get for any $\ell \geq \ell_1$ that
\[
\mathbb{P}([U_{\ell} > 1]) \leq  2 \sum_{r = 1}^{\infty} \exp \left(\left(-\ell^{2\delta} + \log 2\right) r \right) \leq 
16 e^{-\ell^{2\delta}},
\]
where in the last inequality we used that $\exp( -\ell^{2\delta} + \log 2)\le \exp(-1+\log 2) < 3/4$. 

In summary, since $\ell_1$ is absolute, we have
\[
\sum_{\ell = 1}^{\infty} \mathbb{P}([U_{\ell} > 1]) \ll \sum_{\ell = 1}^{\infty} 16 e^{-\ell^{2\delta}} < \infty.
\]
Hence, by the Borel-Cantelli lemma,
\[
\mathbb{P} \left( \limsup_{\ell \to \infty} [U_{\ell} > 1] \right) = 0
\]
or equivalently
\[
\mathbb{P} \left( \liminf_{\ell \to \infty} [U_{\ell} \leq 1] \right) = 1.
\]
Therefore, for almost all $f$ there exists a $\ell_0:=\ell_0(f) \geq 1$ such that
\[
\max_{0 \leq n < 2^r } |P(\tau_{\ell,r}(n+1)) - P(\tau_{\ell,r}(n))| \leq \lambda_r
\]
for every $\ell \geq \ell_0$ and $r\ge 1$. It thus follows from 
Proposition \ref{prop:dyadic} that for almost all $f$ there is $\ell_0$ depending on $f$ such that
\begin{align*}
\max_{\sigma \in [\sigma_{\ell}, \sigma_{\ell-1}]}|P(\sigma) - P(\sigma_{\ell})| & \leq 2 \sum_{r = 1}^{\infty} \lambda_r 
                           = 2\sqrt{2 C_1} \sum_{r = 1}^{\infty} \frac{\sqrt{r}}{2^r} = C < \infty
\end{align*}
for every $\ell\ge \ell_0$. Here, $C$ is an absolute constant. So, the assertion in the lemma follows.
\end{proof}

\section{Proof of Proposition \ref{prop:lowerboundresult}}\label{sec:harper}

We begin by understanding the size of the random variables of the form
\[
\sup_{1 \leq t \leq T_k} \sum_p \frac{f(p)\cos(t \log p)}{p^{\sigma_k}}
\]
for carefully chosen sequences $T_k$ and $\sigma_k$. To this end, we rely on a result by Harper \cite{harper2013}, which is stated here as Corollary~\ref{cor:harper_result}. Since the main ingredient in the proof of this corollary
is one of Harper's results \cite[Corollary 2]{harper2013} on Gaussian processes, we state that here as well. 

\begin{theorem}[Corollary 2 in \cite{harper2013}]\label{harpertheorem}
    Let $\{g_p\}$ be a sequence of standard normal Gaussian random variables indexed by the primes. 
    As $x \to \infty$,
    \[
    \mathbb{P} \left( \sup_{1 \leq t \leq 2(\log \log x)^2} \sum_{p \leq x} g_p \frac{\cos (t \log p)}{p^{1/2 + 1/\log x}} \leq \log \log x - \log \log \log x + O \Big( (\log \log \log x)^{3/4} \Big) \right)
    \]
    is $O\Big( (\log \log \log x)^{-1/2} \Big).$
\end{theorem}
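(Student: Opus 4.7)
The plan is to apply a modified second moment method in the style of Bramson's ballot argument for branching random walks, adapted to the logarithmically correlated Gaussian field $X_t := \sum_{p \le x} g_p \cos(t \log p)/p^{1/2 + 1/\log x}$. As a preliminary, I would record two facts: first, by Mertens' theorem, $\mathrm{Var}(X_t) = \tfrac12 \log\log x + O(1)$ uniformly in $t \in [1, 2(\log\log x)^2]$; and second, by classical estimates on $\sum_p \cos(u\log p)/p$, the covariance $\mathbb{E}[X_s X_t]$ behaves like $\tfrac12 \log(1/|s-t|)$ for $|s-t| \ge 1/\log x$. Together these identify $\{X_t\}$ as a log-correlated Gaussian field whose maximum should be of order $\log\log x$, so the target $L := \log\log x - \log\log\log x + O((\log\log\log x)^{3/4})$ sits just below the conjectural maximum --- in the regime where a careful second moment argument can succeed.

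First I would discretize, replacing the supremum over $t$ by the maximum over a grid $\mathcal{T}$ of cardinality $\approx (\log\log x)^2 \log x$ and spacing $\approx 1/\log x$, justified by a standard chaining bound for the oscillation within each cell. Next I would decompose the primes dyadically in $\log\log p$ by setting $X_t^{(j)} = \sum_{p \in P_j} g_p \cos(t\log p)/p^{1/2+1/\log x}$ where $P_j = (e^{e^{j-1}}, e^{e^j}]$ and $1 \le j \le J := \lfloor \log\log x \rfloor$. The blocks are independent, each contributes variance $\approx 1/2$, and the correlation length of $X^{(j)}$ is $\approx e^{-e^{j-1}}$; the partial sums $S_j(t) := \sum_{i \le j} X_t^{(i)}$ thus form an approximate branching random walk at progressively finer $t$-scales.

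The central step is to introduce a barrier $(b_j)_{j \le J}$ rising slowly in $j$, with $b_J \approx L$ and with slack matched to the $O((\log\log\log x)^{3/4})$ error in $L$, and to study
\[
N := \#\{t \in \mathcal{T} : S_j(t) \le b_j \text{ for all } j \le J, \text{ and } S_J(t) > L\}.
\]
A Gaussian/ballot computation gives a lower bound on $\mathbb{E}[N]$, while the more delicate second moment $\mathbb{E}[N^2]$ is controlled by splitting pairs $(s,t)$ according to their branching scale --- the largest $j$ with $|s-t| \le e^{-e^{j-1}}$ --- and applying Gaussian correlation estimates. A suitable choice of $b_j$ yields $\mathbb{E}[N^2] \le (1 + O((\log\log\log x)^{-1/2}))\,\mathbb{E}[N]^2$, after which the Paley--Zygmund inequality delivers the desired conclusion $\mathbb{P}(N = 0) = O((\log\log\log x)^{-1/2})$.

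The main obstacle will be this second moment estimate, specifically pushing the ratio $\mathbb{E}[N^2]/\mathbb{E}[N]^2$ close enough to $1$ to obtain the stated probability bound. The barrier must be tight enough to suppress the contribution of highly correlated pairs, which otherwise dominate $\mathbb{E}[N^2]$ and prevent Paley--Zygmund from approaching $1$, but loose enough that $\mathbb{E}[N]$ itself remains large. Balancing these constraints is what dictates the $3/4$-th power slack in $L$, and getting this balance right is where the bulk of the technical work lives.
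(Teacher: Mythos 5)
This theorem is cited, not proved, in the paper: it is stated verbatim as Corollary~2 of Harper \cite{harper2013} and imported as a black box (the paper only sketches the further step of replacing the Gaussians $g_p$ by a random multiplicative function in Corollary~\ref{cor:harper_result}). So there is no in-paper argument against which to compare your proposal.

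Judged on its own, your sketch is a faithful reconstruction of the strategy Harper's Gaussian-process argument does follow: multiscale block decomposition, a slowly rising barrier, a truncated second moment, and Paley--Zygmund. Your normalizations are correct --- $\mathrm{Var}(X_t)\approx\tfrac12\log\log x$ by Mertens, covariance $\approx\tfrac12\log(1/|s-t|)$ for $1/\log x\le|s-t|\le1$, blocks $P_j=(e^{e^{j-1}},e^{e^j}]$ each of variance $\approx1/2$, and $\approx(\log\log x)^2\log x$ effectively independent sample points, so the leading-order maximum is $\approx\log\log x$. A small simplification: the chaining step you invoke to pass from the grid $\mathcal{T}$ to the supremum is not needed, since $N>0$ already exhibits a single $t\in\mathcal{T}$ with $X_t>L$ and hence $\sup_t X_t>L$; chaining is only required when bounding the supremum from above. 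The substantive issue, which you flag yourself, is that the entire content of the theorem resides in the second moment estimate $\mathbb{E}[N^2]\le\bigl(1+O((\log\log\log x)^{-1/2})\bigr)\mathbb{E}[N]^2$: the barrier profile must be chosen so that this holds, and the same choice is what produces both the $(\log\log\log x)^{3/4}$ slack in $L$ and the $(\log\log\log x)^{-1/2}$ failure probability. None of that computation is carried out. What you have is the correct plan of attack on a result this paper merely cites, not a proof of it.
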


\begin{cor}[Analysis from end of \S{6.3}, \cite{harper2013}]\label{cor:harper_result}
    Let $f$ be a random multiplicative function. Let $y = \log^8 x$. As $x \to \infty$, 
    \[
    \mathbb{P} \left( \sup_{1 \leq t \leq 2(\log \log x)^2} \sum_{p} \frac{f(p)\cos(t \log p)}{p^{1/2 + 1/\log x}} \leq \log \log x - \log \log y - O(1) - ( \log \log \log x)^{3/4} \right) 
    \]
    is $O\Big( (\log \log \log x)^{-1/2} \Big).$
\end{cor}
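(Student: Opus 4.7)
The plan is to reduce Corollary~\ref{cor:harper_result} to Theorem~\ref{harpertheorem} by (i) controlling the tail over primes $p > x$ separately and (ii) transferring Harper's Gaussian supremum bound over $p \leq x$ to the random multiplicative sum, following the analysis at the end of \S6.3 of \cite{harper2013}.

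\textbf{Step 1 (Tail over $p > x$).} Split $F(t) := \sum_p f(p)\cos(t\log p)/p^{1/2+1/\log x}$ as $F_{\leq x}(t) + F_{>x}(t)$. The regularising factor $p^{-1/\log x}$ makes $\sum_{p > x} p^{-1-2/\log x} = O(1)$, so $\mathbb{E}|F_{>x}(t)|^2 = O(1)$ uniformly in $t$. A standard chaining estimate over $t \in [1, 2(\log\log x)^2]$ then gives $\sup_t |F_{>x}(t)| = O(\sqrt{\log\log\log x})$ outside an event of probability $o((\log\log\log x)^{-1/2})$, which is comfortably absorbed into the $(\log\log\log x)^{3/4}$ error.

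\textbf{Step 2 (Gaussian transfer over $p \leq x$).} Let $G(t) := \sum_{p \leq x} g_p \cos(t\log p)/p^{1/2+1/\log x}$ be the Gaussian process from Theorem~\ref{harpertheorem}; it has the same covariance as $F_{\leq x}$. Pick a net $\mathcal{N} \subset [1, 2(\log\log x)^2]$ with mesh fine enough that the Lipschitz behaviour of both processes controls their oscillation between neighbours by $O(1)$, and apply a quantitative multivariate CLT comparing $(F_{\leq x}(t))_{t \in \mathcal{N}}$ with $(G(t))_{t \in \mathcal{N}}$. Since each $f(p)$ contributes only $O(1/\sqrt{p})$ while the total variance is $\asymp \log\log x$, the single-point Berry--Esseen rate is fast and the union bound over $\mathcal{N}$ is tolerable. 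One obtains
\[
\sup_t F_{\leq x}(t) \;\geq\; \sup_t G(t) - O(1)
\]
outside probability $o((\log\log\log x)^{-1/2})$. Applying Theorem~\ref{harpertheorem} and using $\log\log y = \log\log\log x + O(1)$ (since $y = \log^8 x$) then gives the required lower bound for $F_{\leq x}$; Step 1 extends it to $F$.

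\textbf{Main obstacle.} Step 2 is the delicate point: a Gaussian approximation uniformly in $t$, losing only $O(1)$ in the threshold. Because the cosine frequencies $t\log p$ reach $(\log\log x)^2 \log x$, the mesh of $\mathcal{N}$ must be simultaneously fine enough that each process varies by only $O(1)$ between neighbours and sparse enough that the union bound against the single-point CLT error is manageable. Balancing these two constraints is the content of the end-of-\S6.3 analysis in \cite{harper2013}, which we follow.
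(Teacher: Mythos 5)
The paper's ``proof'' of Corollary~\ref{cor:harper_result} is a direct citation to the end of \S6.3 and Appendix~B of \cite{harper2013}, so in spirit you are reconstructing exactly the argument the paper defers to, with the same decomposition (control of the tail over large primes plus a Gaussian-to-Rademacher transfer for the head). However, your Step~1 contains a concrete quantitative error, and Step~2 names the difficulty without resolving it — which is fair enough given that the paper does the same, but the mechanism you sketch would not actually close.

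On Step~1: a chaining bound for $\sup_{1\le t\le 2(\log\log x)^2}|F_{>x}(t)|$ does not give $O(\sqrt{\log\log\log x})$. Although the pointwise variance is $O(1)$, the process has derivative $F_{>x}'(t)=-\sum_{p>x} f(p)(\log p)\sin(t\log p)\,p^{-1/2-1/\log x}$, whose $L^2$ norm is $\asymp\log x$ because of the extra $\log p$ factor. The entropy integral over $[1,2(\log\log x)^2]$ with this Lipschitz modulus therefore gives $\mathbb{E}\sup_t|F_{>x}(t)|\asymp\sqrt{\log(\log x\cdot(\log\log x)^2)}\asymp\sqrt{\log\log x}$, which is far larger than the available error term $(\log\log\log x)^{3/4}$ and cannot be absorbed. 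The correct way to dispose of the tail is to use independence rather than a uniform bound: for a random multiplicative function, $\{f(p)\}_{p\le x}$ and $\{f(p)\}_{p>x}$ are independent, so one may condition on the head, take $t^*$ to be the (now deterministic) argmax of $F_{\le x}$, and observe that the \emph{single} random variable $F_{>x}(t^*)$ has mean zero and variance $O(1)$, hence is $O(1)$ outside an event of negligible probability; no control of $\sup_t|F_{>x}(t)|$ is needed. On Step~2: the Berry--Esseen-plus-union-bound scheme as written does not balance. To keep the oscillation of $F_{\le x}$ between adjacent net points at $O(1)$ you need mesh $\lesssim 1/\log x$, giving a net of size $\gtrsim\log x\,(\log\log x)^2$, while a single-point CLT error of $(\log\log x)^{-O(1)}$ is nowhere near small enough to union-bound over it. This is not a calibration issue — the resolution in \cite[Appendix B]{harper2013} is a genuinely multivariate comparison of the joint law at the sample points, not a union of marginal CLTs. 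Since the paper also simply cites that analysis, your proposal is in the same position on Step~2, but it should flag that the naive scheme fails rather than present it as an adjustable trade-off.
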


\begin{proof}
As explained at the end of \S{6.3} in \cite{harper2013}, 
this follows from a version of Theorem~\ref{harpertheorem} 
derived using a multivariate central limit theorem. 
This is detailed in \cite[Appendix B]{harper2013}, and yields a version 
 of Theorem~\ref{harpertheorem}
with the $g_p$ replaced with $f(p)$, where $f$ is a random multiplicative function. 
We remark that the choice $y=\log^8 x$ is as in \cite[p.\ 606]{harper2013}.
\end{proof}
We apply Corollary~\ref{cor:harper_result} along the sequence $x_k$ tending to infinity given in \eqref{eq:xkdefn}.

\begin{prop}[Explicit version of Corollary \ref{cor:harper_result}]
\label{prop:independentlowerbound}
    Let $c$ and $C_1$ be any constants satisfying $c > 2$ and $C_1 > 1$. Let
    \[
    \sigma_k := \frac{1}{2} + \frac{1}{\exp \left( \exp(k^c) \right)}.
    \]
   Consider the event $E_k:= E_k(c, C_1)$ defined as
    \begin{multline*}
    \Bigg[ \sup_{1 \leq t \leq 2\log^2\left( \frac{1}{\sigma_k - \frac{1}{2}} \right) } \sum_p \frac{f(p) \cos(t \log p)}{p^{\sigma_k}} \geq \log \left( \frac{1}{\sigma_k - 1/2} \right) - C_1 \log \log \left( \frac{1}{\sigma_k - 1/2} \right)  \Bigg].
     \end{multline*}
    Then
    \[
    \mathbb{P} \left( \liminf_{k \to \infty} E_k \right) = 1.
    \]
    Consequently, for almost all random multiplicative functions $f$ there exists a constant $k_0 := k_0(f, c, C_1) \geq 1$ such that
    \[
    \sup_{1 \leq t \leq 2\log^2\left( \frac{1}{\sigma_k - \frac{1}{2}} \right) } \sum_p \frac{f(p) \cos(t \log p)}{p^{\sigma_k}} \geq \log \left( \frac{1}{\sigma_k - 1/2} \right) - C_1 \log \log \left( \frac{1}{\sigma_k - 1/2} \right)
    \]
    for each $k \geq k_0.$
\end{prop}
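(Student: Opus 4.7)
The plan is to apply Corollary~\ref{cor:harper_result} along a sequence $x_k \to \infty$ tuned so that its conclusion translates directly into a bound on $\mathbb{P}(E_k^c)$, and then to conclude via the first Borel--Cantelli lemma.

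Concretely, I would set $\log x_k := \exp(\exp(k^c))$, so that $1/\log x_k = \sigma_k - 1/2$ and the exponent $1/2 + 1/\log x_k$ of the Dirichlet series in Corollary~\ref{cor:harper_result} matches $\sigma_k$ exactly. With this choice the range $1 \le t \le 2(\log \log x_k)^2$ coincides with $1 \le t \le 2 \log^2(1/(\sigma_k - 1/2))$, so the suprema in $E_k$ and in the corollary are literally the same random variable. Writing $y_k := \log^8 x_k$, one has $\log \log y_k = \log \log \log x_k + O(1)$, so the threshold appearing inside Corollary~\ref{cor:harper_result} rewrites, in terms of $k$, as
\[
\log\log x_k - \log\log y_k - O(1) - (\log\log\log x_k)^{3/4} \;=\; \exp(k^c) - k^c - O\bigl(k^{3c/4}\bigr).
\]

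The threshold defining $E_k$ equals $\exp(k^c) - C_1 k^c$ under the same identifications. Since $C_1 > 1$ and $k^{3c/4} = o(k^c)$, it lies below the corollary's threshold once $k$ is sufficiently large, so for all such $k$ the complement $E_k^c$ is contained in the event whose probability Corollary~\ref{cor:harper_result} controls. Consequently
\[
\mathbb{P}(E_k^c) \;\le\; O\bigl((\log\log\log x_k)^{-1/2}\bigr) \;=\; O\bigl(k^{-c/2}\bigr),
\]
and the hypothesis $c > 2$ gives $\sum_k \mathbb{P}(E_k^c) < \infty$. The first Borel--Cantelli lemma (which requires no independence among the $E_k$) then yields $\mathbb{P}(\limsup E_k^c) = 0$, i.e.\ $\mathbb{P}(\liminf E_k) = 1$; the pointwise almost-sure statement for individual $f$ is just the definition of $\liminf$.

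The only nontrivial step is the size comparison between the two thresholds, and it is essentially a bookkeeping exercise in iterated logarithms. The hypothesis $C_1 > 1$ is used precisely to absorb the $(\log\log\log x_k)^{3/4}$ error term in the corollary, while $c > 2$ is used only to ensure summability in Borel--Cantelli; no structural information about the dependence of the events $E_k$ enters the argument.
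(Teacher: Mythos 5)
Your proposal is correct and follows essentially the same route as the paper: choose $\log x_k = \exp(\exp(k^c)) = 1/(\sigma_k - 1/2)$ so the supremum in Corollary~\ref{cor:harper_result} matches that in $E_k$, verify that $C_1 > 1$ makes the $E_k$ threshold eventually smaller than the corollary's (so $E_k^c$ is contained in the event the corollary controls), bound $\mathbb{P}(E_k^c) = O(k^{-c/2})$, and apply the first Borel--Cantelli lemma using $c > 2$ for summability. The threshold bookkeeping in iterated logarithms is exactly what the paper does as well.
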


\begin{proof}
\begin{comment}
    We will show that
    \[
    \mathbb{P} \left( \limsup_{k \to \infty} \overline{E_k} \right) = 0,
    \]
    using the Borel-Cantelli lemma.

    By Corollary \ref{cor:harper_result}, we have
    \[
    \mathbb{P} \left( \sup_{1 \leq t \leq 2(\log \log x)^2} \sum_{p} \frac{f(p)\cos(t \log p)}{p^{1/2 + 1/\log x}} \leq \log \log x - \log \log y - O(1) - ( \log \log \log x)^{3/4} \right)
    \]
    is $O\Big( (\log \log \log x)^{-1/2} \Big)$ as $x \to \infty$ and $y = \log^8 x.$
\end{comment}
   Let 
   \begin{equation}\label{eq:xkdefn}
   \log x_k := \frac{1}{\sigma_k - 1/2} = \exp\left( \exp(k^c) \right).
   \end{equation}
   Consider the events $\mathcal{A}_k$ defined for $k \geq 1$ as
    \begin{multline*}
    \Bigg[ \sup_{1 \leq t \leq 2(\log \log x_k)^2} \sum_{p} \frac{f(p)\cos(t \log p)}{p^{1/2 + 1/\log x_k}} \leq \log \log x_k - \log \log (\log^8 x_k) \\
    - O(1) - ( \log \log \log x_k)^{3/4} \Bigg].
    \end{multline*}
    Since $x_k \to \infty$ as $k \to \infty$, and $C>1$, we can find $k_0 := k_0(c, C_1) \geq 1$ so that 
    for each $k \geq k_0$,
   \[
   \log \log x_k - C_1 \log \log \log x_k \leq \log \log x_k - \log \log ( \log^8 x_k ) - O(1) - (\log \log \log x_k)^{3/4}.
   \]
  In other words, we have $\overline{E_k} \subseteq \mathcal{A}_k$ for each $k \geq k_0$, where $\overline{E_k}$ denotes the complement of $E_k$. Therefore,
  \[
   \sum_{k \geq 1} \mathbb{P} (\overline{E_k}) \ll \sum_{k \geq 1} \mathbb{P} (A_k).
  \]
  Furthermore, an application of Corollary \ref{cor:harper_result} provides 
   \[
   \mathbb{P}(A_k) = O\Big( (\log \log \log x_k)^{-1/2} \Big) = O \Big( k^{-c/2} \Big)
   \]
   as $k \to \infty$.
   So, we have 
   \begin{align*}
 \sum_{k \geq 1} \mathbb{P} (\overline{E_k}) & \ll \sum_{k \geq 1} \mathbb{P} (A_k) \ll \sum_{k \geq 1} k^{-c/2} < \infty  
   \end{align*}
   since $c > 2$. By the Borel-Cantelli lemma, the argument is complete.

   %After noticing
  % \[
% \log x_k = \frac{1}{\sigma_k - 1/2},  
  % \]
  % we have that
  % \[
  % \left[ \sup_{1 \leq t \leq 2(\log \log x_k)^2} \sum_{p} \frac{f(p)\cos(t \log p)}{p^{1/2 + 1/\log x_k}} \leq \log \log x_k - C_1 \log \log \log x_k \right] = \overline{E_k}.
 %  \]
  % By combining everything together, we have that

\end{proof}

\begin{proof}[Proof of Proposition \ref{prop:lowerboundresult}]
     By \cite{wintner1944}, $F(\sigma + it)$ has an Euler product whenever $\sigma > 1/2$, almost surely. So, since $\sigma_k>1/2$, we have almost surely
    \begin{equation*}
    \begin{split}
     |F(\sigma_k + it)| & =  \exp\Big( \Re \log F(\sigma_k + it) \Big) \\
     & = \exp \left( \Re \left( \sum_{p} \frac{f(p)}{p^{\sigma_k + it}} - \frac{1}{2} \sum_p \frac{1}{p^{2\sigma_k + 2it}} + \sum_{m = 3}^{\infty} \sum_{p} \frac{(-1)^{m+1} f(p)^m}{mp^{m(\sigma_k + it)}} \right) \right)\\
    & = \exp\Bigg( \sum_{p}\frac{f(p) \cos(t \log p)}{p^{\sigma_k}} - \frac{1}{2} \sum_p \frac{\cos(2t \log p)}{p^{2\sigma_k}} \\
    & \hspace{4.5cm} + \sum_{m = 3}^{\infty} \sum_p \frac{(-1)^{m+1} f(p)^m \cos(mt \log p)}{m p^{m\sigma_k}} \Bigg)
    \end{split}
    \end{equation*}
    for any $k \geq 1$ and $t \in \mathbb{R}$ after log expanding. Here, the logarithm branch refers to one where $\lim_{\sigma\to +\infty} \log F(\sigma)=0$.
%Notice that for any finite length interval $I \subseteq [1, \infty)$, the $\sup_{t \in I}$ of the exponential is achieved at the $\sup_{t \in I}$ of the exponent. 
In view of this, we can find an almost sure lower bound for
\[
\sup_{1 \leq t \leq \log^2\left( \frac{1}{\sigma_k - 1/2} \right)} |F(\sigma_k + it)|
\]
by finding an almost sure lower bound for
\begin{multline}\label{eq:mainsup}
\sup_{1 \leq t \leq \log^2\left( \frac{1}{\sigma_k - 1/2} \right)} \sum_{p}\frac{f(p) \cos(t \log p)}{p^{\sigma_k}} - \frac{1}{2} \sum_p \frac{\cos(2t \log p)}{p^{2\sigma_k}} \\ + \sum_{m = 3}^{\infty} \sum_p \frac{(-1)^{m+1} f(p)^m \cos(mt \log p)}{m p^{m\sigma_k}}.
\end{multline}
To do this, we will use the following observation: for any real-valued functions $\phi, \psi, \rho$ defined on a closed bounded interval $I \subset \mathbb{R}$, we have that
\[
\sup_{x \in I} \Big(\phi(x) - \psi(x) + \rho(x)\Big) \geq \left(\sup_{x \in I} \phi(x) \right) - \left( \sup_{x \in I} \psi(x) \right) + \left(\inf_{x \in I} \rho(x) \right).
\]
Specifically, we will find a lower bound for the expressions
\begin{equation}\label{eq:primary_term}
    \sup_{1 \leq t \leq \log^2 \left( \frac{1}{\sigma_k - 1/2} \right)} \sum_{p}\frac{f(p) \cos(t \log p)}{p^{\sigma}}
\end{equation}
and
\begin{equation}\label{eq:constant_term}
    \inf_{1 \leq t \leq \log^2\left( \frac{1}{\sigma_k - 1/2} \right)} \sum_{m = 3}^{\infty} \sum_p \frac{(-1)^{m+1} f(p)^m \cos(mt \log p)}{m p^{m\sigma}}
\end{equation}
and an upper bound for
\begin{equation}\label{eq:secondary_term}
    \sup_{1 \leq t \leq \log^2 \left( \frac{1}{\sigma_k - 1/2} \right)} \frac{1}{2} \sum_p \frac{\cos(2t \log p)}{p^{2\sigma_k}}.
\end{equation}

First by Proposition~\ref{prop:independentlowerbound}, for almost all $f$ there exists $k_1:= k_1(f, c, C_1) \geq 1$ such that 
\[
\sup_{1 \leq t \leq \log^2 \left( \frac{1}{\sigma_k - 1/2} \right)} \sum_{p}\frac{f(p) \cos(t \log p)}{p^{\sigma}} \geq \log \left( \frac{1}{\sigma_k - 1/2} \right) - C_1 \log \log \left( \frac{1}{\sigma_k - 1/2} \right) 
\]
for each $k \geq k_1$. Therefore, we have an eventual lower bound on (\ref{eq:primary_term}).

Second, for any $t \geq 1$, we have
    \begin{align*}
    \left| \sum_{m = 3}^{\infty} \sum_p \frac{(-1)^{m+1} f(p)^m \cos(mt \log p)}{m p^{m\sigma_k}} \right| & \leq \sum_{m = 3}^{\infty} \sum_p \frac{1}{m p^{m \sigma_k}} \\
    & \leq \sum_{m = 3}^{\infty} \sum_p \frac{1}{p^{m/2}} \\
    & = \sum_{p} \sum_{m = 3}^{\infty} \left( \frac{1}{p^{1/2}} \right)^m \\
    & = \sum_p \frac{1}{p ( p^{1/2} - 1)} \leq 2.112.
    \end{align*}
    To obtain the last inequality, we computed the sum over the first 9 million primes, estimated the tail by an integral, and rounded-up the result. As the above bound holds for any $t \geq 1$, the sum (\ref{eq:constant_term}) is $\ge -2.112$, independent of $k$.

And third, for any $t \geq 1$,
    \[
    \left| \sum_p \frac{\cos(2t \log p)}{p^{2\sigma_k}} \right| \leq \sum_{p} \frac{1}{p^{2\sigma_k}}.
    \]
    So,
    \[
    \left| \sup_{1 \leq t \leq \log^2 \left( \frac{1}{\sigma_k - 1/2} \right)} \frac{1}{2} \sum_p \frac{\cos(2t \log p)}{p^{2\sigma_k}} \right| \leq \frac{1}{2} \sum_{p} \frac{1}{p^{2\sigma_k}}.
    \]
    By Proposition \ref{prop:zetaasym},
    \[
    \sum_{p} \frac{1}{p^{2\sigma_k}} \sim \log \left( \frac{1}{2\sigma_k - 1} \right) 
    \]
   as $2\sigma_k \to 1^+$. Therefore, letting $C$ be any constant satisfying
   $C > 1/2$, there exists $k_2:= k_2(C)$ such that
    \[
    \frac{1}{2}  \sum_p \frac{ 1}{p^{2\sigma_k}} \leq C\log \left( \frac{1}{2\sigma_k - 1} \right) = C \log \left( \frac{1}{\sigma_k - 1/2} \right) - C \log 2
    \]
    for each $k \geq k_2$. Thus, expression (\ref{eq:secondary_term}) is bounded above by \[
    C \log \left( \frac{1}{\sigma_k - 1/2} \right) - C \log 2.
    \]
    
Combining the bounds for (\ref{eq:primary_term}), (\ref{eq:constant_term}), and (\ref{eq:secondary_term}), it follows that for any $C_1 > 1$ and $C > 1/2$, and for almost all $f$, there exists $k_0 := \max(k_1, k_2) \geq 1$ such that
\begin{align*}
    (\ref{eq:mainsup}) & \geq \log \left( \frac{1}{\sigma_k - 1/2} \right) - C_1 \log \log \left( \frac{1}{\sigma_k - 1/2} \right) - C \log \left( \frac{1}{\sigma_k - 1/2} \right) + C \log 2 - 2.112
\end{align*}
for each $k \geq k_0$.
Letting $C_0:= 1 - C$ and $C_2 := C\log 2 - 2.112$, and additionally requiring that $C<1$ so that $C_0$ is positive, 
the right-side of the last inequality can be 
written as 
\[
C_0 \log \left( \frac{1}{\sigma_k - 1/2} \right) - C_1 \log \log \left( \frac{1}{\sigma_k - 1/2} \right) + C_2,
\]
where $C_0 \in (0, 1/2)$ and $C_2 \in (-1.765, -1.419)$.
\end{proof}

\begin{comment}
As a result, we get that for almost all $f$ and for any constants $C_0 > 1/2,$ $C_1 > 1$ and $C_2 > -1.765$ there exists a $k_0 := k_0(f, C_1)$ such that
\[
\exp\left( C_0 \log \left( \frac{1}{\sigma_k - 1/2} \right) - C_1 \log \log \left( \frac{1}{\sigma_k - 1/2} \right) + C_2 \right) \leq \sup_{1 \leq t \leq 2 \log^2 \left( \frac{1}{\sigma_k - 1/2} \right)} |F(\sigma_k + it)|
\]
for each $k \geq k_0$.
\end{comment}

\section{Acknowledgements}

%NG would like to thank the RTG grant NSF DMS-2231565 for providing the financial support necessary to complete this work. Furthermore, NG thanks GH for his immense patience and the many useful conversations regarding the content of this work.

NG would like to thank the RTG grant NSF DMS-2231565 for necessary financial support. Furthermore, NG thanks GH for his invaluable patience and enlightening discussions related to this work.

\appendix

\section{Useful Analysis Results}

\begin{defn}
Given an interval $[a, b]$, where $-\infty < a < b < \infty$, we denote the \emph{$r$-th decomposition} of $[a, b]$ by
\[
D_r:=D_r([a, b]) = \{ \tau_{r}(n) : 0 \leq n \leq 2^{r} \},
\]
where, for positive integers $r$,
\[
\tau_{r}(n) := a + \frac{n}{2^{r}}( b - a).
\]
%We denote the set of all dyadic decompositions of an interval $[a, b]$ by
%\[
%\mathfrak{D}([a, b]) := \bigcup_{r \geq 0} D_{r}([a, b]).
%\]
\end{defn}

\begin{prop}\label{prop:dyadic}
Let $[a, b]$ be an interval and let $D_{r}$ be its $r$-th decomposition. Let $f$ be a continuous function on $[a, b]$ and suppose that for all $r \geq 1$ there are numbers $\lambda_{r}$, indexed by the positive integers $r$, such that
    \[
    \max_{0 \leq n < 2^{r}} |f(\tau_{r}(n+1)) - f(\tau_{r}(n))| \leq \lambda_{r}.
    \]
    Then, for any $s, t \in [a, b]$ we have
    \[
    |f(s) - f(t)| \leq 2\sum_{r = R + 1}^{\infty} \lambda_{r},
    \]
    where $R$ is the integer satisfying
    \[
    \frac{|b - a|}{2^{R+1}} < |s - t| \leq \frac{|b - a|}{2^{R}}.
    \]
\end{prop}

\begin{proof}
    Take any $s, t \in [a, b]$ with $s < t$. For integers $r >R$, set
    \begin{equation*}\label{eq:maxbound}
    s_{r} := \max \left\{ u \leq s : u \in D_{r} \right\}\qquad \text{and}\qquad
     t_{r} := \min \left\{ u \geq t : u \in D_{r} \right\}.
    \end{equation*}
    Notice that $s_{r} \to s$ from below and $t_{r} \to t$ from above as $r \to \infty$. Since $f$ is continuous on $[a, b]$, we have
    \[
    |f(s) - f(t)| = \lim_{r \to \infty} |f(s_{r}) - f(t_{r})|.
    \]
    We will show by induction that
    \begin{equation}\label{induction}
         |f(s_M) - f(t_M)| \leq 2 \sum_{r= R+1}^M \lambda_{r}
    \end{equation}
    for all integers $M > R$.
    
    Suppose that $M = R + 1$. Notice that the constraint on $R$ forces $s_R = \tau_{R}(n)$ and
    $t_R = \tau_{R}(n+1)$ for some $0 \leq n \leq 2^R.$ Additionally, note that 
    \begin{equation}\label{eq:tauidentity}
    \tau_{R+1}(2n) = \tau_R(n).
    \end{equation}
    In view of relation (\ref{eq:tauidentity}), we have 
    \[
    \tau_{R+1}(2n) \leq s < \tau_{R+1}(2n+1) < t \leq \tau_{R+1}(2n+2).
    \]
    For otherwise (if $t\le \tau_{R+1}(2n+1)$) we get $|s - t| \leq |b-a|/2^{R+1}$, contradicting the choice of $R$.
    Hence, 
    \[
    s_{R+1} = \tau_{R+1}(2n) \qquad \text{and} \qquad t_{R+1} = \tau_{R+1}(2n+2).
    \]
    Using the hypothesis in the proposition, it follows that
    \begin{align*}
        |f(s_{R+1}) - f(t_{R+1})| & \leq |f(s_{R+1}) - f(\tau_{R+1}(2n+1))| + |f(\tau_{R+1}(2n+1)) - f(t_{R+1})| \\
                                  & \leq 2 \lambda_{R+1}.
    \end{align*}   
    Hence, the base case is verified.

    Now, let $M$ be an integer such that $M > R + 1$ and
    \begin{equation}\label{eq:inductivehypo}
    |f(s_M) - f(t_M)| \leq 2 \sum_{r = R+1}^M \lambda_{r}.
    \end{equation}
    We will verify \eqref{induction} for $M+1$. Notice that if $s_{M} = \tau_M(n)$ for some $n$ then by the definition of $s_M$ we have
    \[
    \tau_M(n) \leq s < \tau_M(n+1).
    \]
    By the formula \eqref{eq:tauidentity}, this implies
    \[
    s_M = \tau_{M+1}(2n) \leq s < \tau_{M+1}(2n + 2)
    \]
    which results with two candidates for $s_{M+1}$: either $\tau_{M+1}(2n)$ or $\tau_{M+1}(2n+1).$ In either case, we get
    \[
    |s_{M+1} - s_M| \leq \frac{|b-a|}{2^{M+1}}.
    \]
    A similar argument can be done to show that $|t_{M+1} - t_M|$ has the same upper bound. Consequently, by the hypothesis in the proposition,
    \begin{equation}\label{eq:lambdabounds}
    |f(s_{M+1}) - f(s_{M})| \leq \lambda_{M+1} \qquad \text{and}\qquad |f(t_{M+1}) - f(t_{M})| \leq \lambda_{M+1}
    \end{equation}
    as $s_{M+1}$ and $s_M$ (similarly $t_{M+1}$ and $t_M$) are either identical or consecutive in $D_{M+1}.$
    So, on writing
    \[
    |f(s_{M+1}) - f(t_{M+1})| \leq |f(s_{M+1}) - f(s_{M})| +|f(s_{M}) - f(t_{M})|+ |f(t_{M+1}) - f(t_{M})|,
    \]
    and applying the bounds from (\ref{eq:lambdabounds}) and the inductive hypothesis (\ref{eq:inductivehypo}), we get
    \[
    |f(s_{M+1}) - f(t_{M+1})| \leq \lambda_{M+1} + \lambda_{M+1} + 2 \sum_{r > R}^M \lambda_{r} = 2\sum_{r > R}^{M+1} \lambda_r,
    \]
    which concludes the proof.

\end{proof}

\begin{prop}\label{prop:zetaasym}
    If $x \to 1^+$, then
    \[
    \sum_p \frac{1}{p^x} \sim \log \zeta(x) \sim \log \left( \frac{1}{x - 1} \right).
    \]
\end{prop}

\begin{proof}
For $x > 1$, we may write $\zeta(x)$ as an Euler product and log-expand to get
\begin{align*}
\log \zeta(x) & = \sum_p \sum_{n \geq 1} \frac{1}{n p^{nx}} \\
    & = \sum_p \frac{1}{p^x} + \sum_{n \geq 2} \sum_p \frac{1}{n p^{nx}} \\
    & = \sum_p \frac{1}{p^x} + O(1).
\end{align*}
Since the prime sum tends to $\infty$ as $x\to 1^+$, the first asymptotic follows.

For the second asymptotic, we use the classical result in number theory that
\[
\zeta(x) \sim \frac{1}{x - 1}
\]
as $x \to 1^+.$ Since logarithms preserve asymptotics, we have that
\[
\log \zeta(x) \sim \log \left( \frac{1}{x - 1}\right) 
\]
as $x \to 1^+$.
    
\end{proof}

\begin{lemma}\label{lem:claim1}
    There is an absolute positive constant $C$ such that if $1/2 < \sigma \leq 1$, then
    \[
    \sum_{p} \frac{\log^2 p}{p^{2 \sigma}} \leq C \frac{1}{(2\sigma - 1)^2}.
    \]
\end{lemma}

\begin{proof}

Take any $1/2 < \sigma \leq 1$.
Since $2\sigma > 1$, the series in question converges. 
So, we may express the series as the Riemann-Stieltjes integral
\begin{align}
\sum_p \frac{(\log p)^2}{p^{2\sigma}} & = \int_{1}^{\infty} \frac{(\log x)^2}{x^{2\sigma}} \ d\pi(x), \notag
\end{align}
where $\pi(x)$ denotes the prime counting function. 
As the function $(\log x)^2 x^{-2\sigma}$ is continuous on $(1, \infty)$, 
we may apply integration by parts formula (e.g.\ \cite[Theorem A.2]{montgomery}) to get
\begin{equation}\label{eq:reexpression}
\int_{1}^{\infty} \frac{(\log x)^2}{x^{2\sigma}} \ d\pi(x) = \left. \frac{\pi(x) (\log x)^2}{x^{2 \sigma}} \right|_ 1^\infty - \int_{1}^{\infty} \pi(x) \frac{2 \log x - 2\sigma (\log x)^2}{x^{2\sigma + 1}} \ dx.
\end{equation}
Using $\pi(x) \sim x/\log x$ as $x \to \infty$ by the Prime Number Theorem, gives
\begin{align*}
\left. \frac{\pi(x) (\log x)^2}{x^{2 \sigma}} \right|_1^\infty & = \lim_{N \to \infty}\left.\frac{\pi(x) (\log x)^2}{x^{2 \sigma}} \right|_1^N \\
            & = \lim_{N \to \infty} \frac{\pi(N) (\log N)^2}{N^{2 \sigma}} \\
            & = \lim_{N \to \infty} \frac{\log N}{N^{2\sigma - 1}}
\end{align*}
Since $2\sigma - 1 > 0$, we therefore obtain
\[
\left. \frac{\pi(x) (\log x)^2}{x^{2 \sigma}} \right|_1^\infty = 0.
\]
Thus, going back to (\ref{eq:reexpression}), on absorbing the negative sign, we have
\begin{align}
\int_{1}^{\infty} \frac{(\log x)^2}{x^{2\sigma}} \ d\pi(x) & = \int_1^{\infty} \pi(x) \frac{2\sigma(\log x)^2 - 2 \log x}{x^{2\sigma + 1}} \ dx \label{eq:intform}
    %& \leq C \int_{1}^{\infty} \frac{2\sigma\log x - 2}{x^{2\sigma}} \ dx.
\end{align}
Note that $\pi(x) = 0$ for $x \in [1, 2)$ and that $\pi(x) < 2 x/\log x$ for $x \geq 2$ by \cite[Corollary 2]{approx_pnt}. So, (\ref{eq:intform}) can be bounded above by
\[
< 2 \int_2^{\infty} \frac{2\sigma\log x - 2}{x^{2\sigma}} \ dx.
\]
Since $2\sigma \log x - 2 < 0$ when $x \leq e^{1/\sigma}$, we can further bound above
for any $\sigma$ satisfying $1/2 < \sigma \leq 1$ by starting our integration at $e^2.$ So,
\begin{align*}
    \int_{2}^{\infty} \frac{(\log x)^2}{x^{2\sigma}} \ d\pi(x) & < 2 \int_{e^2}^{\infty} \frac{2\sigma\log x - 2}{x^{2\sigma}} \ dx \\
    & = \frac{2e^{2 - 4\sigma}(1 - 3\sigma + 4\sigma^2)}{(2\sigma - 1)^2} \\
    & \leq \frac{4}{(2\sigma - 1)^2}.
\end{align*}
    This provides the desired bound with $C = 4.$
\end{proof}

\section{Useful Probability Results}

\begin{prop}[Hoeffding's Inequality]
\label{thm:hoeffding}
    Let $\{X_i\}_{n \in \mathbb{N}}$ be a sequence of i.i.d. random variables such that $a_n \leq X_n \leq b_n$ almost surely. Then for any $\lambda > 0$ we have
    \[
    \mathbb{P} \left( \sum_{n \geq 1} X_n - \sum_{n \geq 1} \mathbb{E}[X_n] \geq \lambda \right) \leq \exp \left( - \frac{2\lambda^2}{\sum_{n \geq 1} (b_n - a_n)^2} \right).
    \]
\end{prop}

\begin{proof}
    Refer to the proof of Lemma 2.3 in \cite{LIL}. 
\end{proof}

\begin{prop}[Borel-Cantelli Lemma]\label{prop:borel}
    Let $(A_1, A_2, \ldots)$ be a sequence of events in a common probability space $(\Omega, \mathcal{F}, \mathbb{P})$. If $\sum_{n \geq 1} \mathbb{P}(A_n) < \infty$ then
    \[
    \mathbb{P} \left( \limsup_{n \to \infty} A_n \right) = 0.
    \]
\end{prop}

\begin{proof}
    Refer to the proof of Lemma 3 in Chapter 6 of \cite{probbook}.
\end{proof}

\begin{prop}[Kolmogorov Three-Series Theorem]\label{prop:twoseries}
    Let $(X_1, X_2, \ldots)$ be an independent sequence of $\mathbb{R}$-valued random variables, and let $b$ be a positive real number. Define
    \[
    Y_n(\omega) = 
    \begin{cases}
        X_n(\omega) & \text{if } |X_n(\omega)| \leq b \\
        0 & \text{otherwise.}
    \end{cases}
    \]
    Then $\sum_n X_n$ converges almost surely if and only if the following three series converge:
    \begin{enumerate}[label={(\alph*)}]
        \item 
        \[
        \sum_{n} \mathbb{P} \Big( [X_n \neq Y_n] \Big);
        \]
        \item 
        \[
        \sum_{n} \mathbb{E} [Y_n];
        \]
        \item 
        \[
        \sum_{n} \var (Y_n).
        \]
    \end{enumerate}
    If one of these three series diverges, then $\sum_n X_n$ diverges almost surely.
\end{prop}

\begin{proof}
    Refer to the proof of Theorem 27 in Chapter 12 of \cite{probbook}.
\end{proof}

\newpage

\printbibliography

\end{document}